\newtheorem{theorem}{Theorem}[section]
\newtheorem{lemma}[theorem]{Lemma}
\theoremstyle{definition}
\newtheorem{definition}[theorem]{Definition}
\newtheorem{formula}{Formula}
\theoremstyle{remark}
\newtheorem{remark}[theorem]{Remark}
\numberwithin{equation}{section}
\begin{document}

\title[Classifying solutions of ${\rm SU}(n+1)$ Toda system]{Classifying solutions of ${\rm SU}(n+1)$ Toda system around a singular source}

\author{Jingyu Mu}
\address{School of Mathematical  Sciences, University of Science and Technology of China, Hefei 230026 China}
\email{jingyu@mail.ustc.edu.cn}
\thanks{
Y.S. is supported in part by NSFC (Grant No. 11931009). 
B.X. is supported in part by the Project of Stable Support for Youth Team in Basic Research Field, CAS (Grant No. YSBR-001) and 
NSFC (Grant Nos. 12271495, 11971450 and 12071449).  
}
\thanks{$^\dagger$B.X. is the corresponding author.}

\author{Yiqian Shi}
\address{CAS Wu Wen-Tsun Key Laboratory of Mathematics and School of Mathematical  \newline \indent Sciences, University of Science and Technology of China, Hefei 230026 China}
\email{yqshi@ustc.edu.cn}
\thanks{}

\author{Tianyang Sun}
\address{School of Mathematical  Sciences, University of Science and Technology of China, Hefei 230026 China}
\email{tysun@mail.ustc.edu.cn}


\author{Bin Xu$^\dagger$}
\address{CAS Wu Wen-Tsun Key Laboratory of Mathematics and  School of Mathematical \newline \indent Sciences, University of Science and Technology of China, Hefei 230026 China}
\email{bxu@ustc.edu.cn}
\subjclass[2020]{Primary 37K10; Secondary 35J47}

\date{}

\dedicatory{}

\keywords{${\rm SU}(n+1)$ Toda system,  regular singularity, unitary curve}

\begin{abstract}
Consider a positive integer $n$ and $\gamma_1>-1,\cdots,\gamma_n>-1$. Let $D=\{z\in {\Bbb C}:|z|<1\}$, and let $(a_{ij})_{n\times n}$ denote the Cartan matrix of $\frak{su}(n+1)$. Utilizing the ordinary differential equation of $(n+1)$th order around a singular source of ${\rm SU}(n+1)$ Toda system, as discovered by Lin-Wei-Ye ({\it Invent Math}, {\bf 190}(1):169-207, 2012), we precisely characterize a solution $(u_1,\cdots, u_n)$ to the ${\rm SU}(n+1)$ Toda system
\begin{equation*}
		\begin{cases} \frac{\partial^2 u_i}{\partial z\partial \bar z}+\sum_{j=1}^n a_{ij} e^{u_j}&=\pi \gamma _i\delta _0\,\,{\rm on}\,\, D\\
			\frac{\sqrt{-1}}{2}\,\int_{D\backslash \{0\}} e^{u_{i} }{\rm d}z\wedge {\rm d}\bar z &<   \infty
		\end{cases}  \quad \text{for all}\quad i=1,\cdots, n \end{equation*}
using $(n+1)$ holomorphic functions that satisfy the normalized condition. Additionally, we demonstrate that for each $1\leq i\leq n$, $0$ represents the cone singularity with angle $2\pi(1+\gamma_i)$ for the metric $e^{u_i}|{\rm d}z|^2$ on $D\backslash\{0\}$, which can be locally characterized by $(n-1)$ non-vanishing holomorphic functions at $0$.
\end{abstract}

\maketitle

\section{Introduction}

Gervais-Matsuo \cite[Section 2.2.]{GM:1993} firstly showed that totally un-ramified holomorphic curves in ${\Bbb P}^n$ induce local solutions to ${\rm SU}(n+1)$ Toda systems in the sense that these systems are actually the infinitesimal Pl\" ucker formulae for these curves. 
A. Doliwa \cite{Do:1997} generalized their result to Toda systems associated with non-exceptional simple Lie algebras.  There have been lots of research works on the classification of solutions of Toda systems of various types which satisfy some boundary conditions on the punctured Riemann surfaces since then. We list some relevant results as follows.

Jost-Wang \cite[Theorem 1.1.]{JW:2002} classified all solutions $(u_1,\cdots, u_n)$ to the
${\rm SU}(n+1)$ Toda system on ${\Bbb C}$ satisfying the so-called finite energy condition:
\begin{equation*}
		\begin{cases} \frac{\partial^2 u_i}{\partial z\partial \bar z}+\sum_{j=1}^n a_{ij} e^{u_j}&=0\,\,{\rm on}\,\, {\Bbb C}\\
			\frac{\sqrt{-1}}{2}\,\int_{{\Bbb C}} e^{u_{i} }{\rm d}z\wedge {\rm d}\bar z &<   \infty
		\end{cases} \quad \text{for all}\quad i=1,\cdots, n.\end{equation*} 
Here  we recall
$(a_{ij})=\begin{pmatrix}
			2 & -1 & 0 & \cdots  & \cdots  & 0\\
			-1 & 2 & -1 & 0 & \cdots & 0\\
			0 & -1 & 2 & -1 & \cdots & 0\\
			\vdots & \vdots & \vdots & \vdots & \vdots & \vdots\\
			0 & \cdots & \cdots & -1 & 2 & -1\\
			0 & \cdots & \cdots & 0 & -1 & 2
		\end{pmatrix}$.
Equivalently, they proved that any holomorphic curve ${\Bbb C}\to {\Bbb P}^n$ 
associated with such a solution can be compactified to a rational normal curve ${\Bbb P}^1\to {\Bbb P}^n$ (\cite[Theorem 1.2]{JW:2002}). 
Consequently, the space of such solutions is isomorphic to 
${\rm PSL}(n+1,\,{\Bbb C})/{\rm PSU}(n+1)$ and has dimension $n(n+2)$. By using the value distribution theory of holomorphic curves, A. Eremenko \cite[Theorem 2]{Ere:2007} made the classification for a larger class of solutions $u=(u_1,\cdots, u_n)$ of the ${\rm SU}(n+1)$ Toda system on ${\Bbb C}$ than  \cite[Theorem 1.1.]{JW:2002} under the condition that as $R\to \infty$ there holds
\[\frac{\sqrt{-1}}{2}\,\int_{|z|<R} e^{u_{1} }{\rm d}z\wedge {\rm d}\bar z=O(R^K)
\quad {\rm for\,\, some}\quad K\geq 0.\]
Jost-Lin-Wang \cite[Proposition 3.1.]{JLW:2006} described for the first time the asymptotic behavior of solutions of the ${\rm SU}(n+1)$
Toda system on ${\Bbb C}\backslash \{0\}$ near singular source $0$. 
For the ${\rm SU}(n+1)$ Toda system on the twice-punctured Riemann sphere with finite energy:
\begin{equation*}
		\begin{cases} \frac{\partial^2 u_i}{\partial z\partial \bar z}+\sum_{j=1}^n a_{ij} e^{u_j}&=\pi \gamma _i\delta _0\,\,{\rm on}\,\, {\Bbb C}\quad (\gamma_i>-1)\\
			\frac{\sqrt{-1}}{2}\,\int_{{\Bbb C}\backslash \{0\}} e^{u_{i} }{\rm d}z\wedge {\rm d}\bar z &<   \infty
		\end{cases}  \quad \text{for all}\quad i=1,\cdots, n. \end{equation*}
Lin-Wei-Ye \cite[Theorem 1.1.]{LWY:2012} classified all its solutions, by which they generalized the result of Jost-Wang. 
The space of these solutions has dimension at most $n(n+2)$. 
Karmakar-Lin-Nie-Wei \cite[Theorem 1.1.]{KLNW2022} obtained the classification of all solutions to the elliptic Toda system associated with a general simple Lie algebra, where the space of all solutions is also of finite dimension.  
Chen-Lin \cite{CL:2022} classified all even solutions to some ${\rm SU}(3)$ Toda systems with critical parameters on tori.

The ${\rm SU}(2)$ Toda system coincides with the  Liouville equation
$\frac{\partial^2 u_1}{\partial z\partial \bar z}+2e^{u_1}=0$, 
whose local solutions $u_1$ are induced by non-degenerate meromorphic functions (\cite{liouville1853equation}) 
and define metrics $e^{u_1}|{\rm d}z|^2$ with Gaussian curvature $4$. 
R. Bryant \cite[Proposition 4]{Bryant:1987} show that if such a metric  
$e^{u_1}|{\rm d}z|^2$ on the punctured disk $D^*:=\{z\in {\Bbb C}:0<|z|<1\}$ has finite area, i.e.  
$\frac{\sqrt{-1}}{2}\,\int_{D^*}\, e^{u_{1} }{\rm d}z\wedge {\rm d}\bar z <\infty$, then near $0$,  
$e^{u_1}|{\rm d}z|^2$ could be expressed by  
$\frac{(\gamma_1+1)^2|\xi|^{2\gamma_1}|{\rm d}\xi|^2}{(1+|\xi|^{2\gamma_1+2})^2}$
for some  constant $\gamma_1>-1$, under another complex coordinate $\xi=\xi(z)$ which is defined near $0$ and preserves $0$, i.e. $\xi(0)=0$.
Moreover, Chou-Wang \cite[Corollary 2]{Chou_Wan:1994} provided a classification of all solutions with finite energy for the Liouville equation over $\mathbb{C}\backslash\{0\}$. Similarly, Prajapat-Tarantello \cite[Theorem 1.1.]{PT2001} accomplished a classification comparable to Chou-Wang's but for a more generalized equation over $\mathbb{C}$. 
We briefly address the discrepancy between the classification result obtained by Chou-Wan and Prajapat-Tarantello, and the one by Bryant. The former yields solutions with three real parameters, whereas the latter involves only one parameter. Bryant accomplished this by using a wealth of complex coordinate changes near $0$, while preserving $0$ to simplify solutions in the latter classification. In particular, Bryant's classification is not sensitive at all to whether or not $\gamma_1$ is an integer.



By using the ordinary differential equation of $(n+1)$th order around a singular source of
${\rm SU}(n+1)$ Toda system discovered by Lin-Wei-Ye \cite[p.201, (7.1)]{LWY:2012}, 
we generalize in Theorem \ref{thm:model} (ii) the result of R. Bryant by classifying 
all solutions $u=(u_1,\cdots, u_n)$ to the following ${\rm SU}(n+1)$ Toda system
\begin{equation}
\label{equ:Toda}
		\begin{cases} \frac{\partial^2 u_i}{\partial z\partial \bar z}+\sum_{j=1}^n a_{ij} e^{u_j}&=\pi \gamma _i\delta _0\,\,{\rm on}\,\, D \quad (\gamma_i>-1) \\
			\frac{\sqrt{-1}}{2}\,\int_{D^*} e^{u_{i} }{\rm d}z\wedge {\rm d}\bar z &<   \infty
		\end{cases}  \quad \text{for all}\quad i=1,\cdots, n. \end{equation}
Roughly speaking, we establish a correspondence between solutions $u=(u_1,\cdots, u_n)$ to \eqref{equ:Toda} and $(n+1)$ holomorphic functions satisfying the normalized condition on $D$. 
Moreover, for each $1\leq i\leq n$, we could characterize the germs at $0$ of metric 
$e^{u_j}|{\rm d}z|^2$ with cone angle
$2\pi(1+\gamma_i)$ at $0$ in terms of some $(n-1)$ holomorphic functions non-vanishing at $0$.
Before the statement of Theorem \ref{thm:model}, we prepare some notations. Recall that the inverse matrix $(a^{ij})_{n\times n}$ of $(a_{ij})_{n\times n}$ satisfies
$a^{ij}=\frac{j(n+1-i)}{n+1}$ for all $1\leq j\leq i\leq n$.
Define $\alpha_i:=\sum_{j=1}^n\, a^{ij}\gamma_j$
for $i=1,\cdots, n$, and set 
\begin{equation}
\label{equ:beta}
\begin{cases}
\beta_0&:=-\alpha_1,\\
\beta_i&:=\alpha_i-\alpha_{i+1}+i\quad \text{for}\quad 1\leq i\leq n-1,\\
\beta_n&:=\alpha_n+n\,.
\end{cases}
\end{equation}
Then, by the very definition of $\beta_i$'s, we have $\beta_i-\beta_{i-1}=\gamma_i+1>0$ for all $i=1,\cdots, n$,
$\beta_0<\beta_1<\cdots<\beta_n$, and $\beta_0+\beta_1+\cdots+\beta_n=n(n+1)/2$.
For any $(n+1)$ holomorphic functions $g_0(z),\cdots, g_k(z)$ on $D$ with $0\leq k\leq n$, we define

 {\small 
\begin{equation}
\label{equ:G}
 G_k\big(\beta_0,\cdots,\beta_k;g_0(z),\cdots, g_k(z);z\big):= 
 z^{k(k+1)/2-(\beta_0+\cdots+\beta_k)}\cdot W\left(z^{\beta _0}g_0(z), \cdots , z^{\beta_k}g_k(z)\right),
\end{equation} 
}
where $W\left(z^{\beta _0}g_0(z),\, z^{\beta _1}g_1(z),\, \cdots,\, z^{\beta_k}g_k(z)\right)$ equals 
\[\begin{vmatrix}
				z^{\beta _0}g_0(z)   & z^{\beta _1}g_1(z) & \cdots  & z^{\beta_k}g_k(z) \\
				\big(z^{\beta_0}g_0(z)\big)' & \big(z^{\beta_1}g_1(z)\big)' & \cdots  & \big(z^{\beta_n}g_k(z)\big)' \\
				\vdots  & \vdots  & \cdots  & \vdots \\
				\big(z^{\beta_0}g_0(z)\big)^{(k)}  & \big(z^{\beta_1}g_1(z)\big)^{(k)} & \cdots  & \big(z^{\beta_k}g_k(z)\big)^{(k)}
			\end{vmatrix}.\]
Then $G_k$ is holomorphic on $D$ and satisfies 
\begin{equation}
\label{equ:G_k}
G_k|_{z=0}=\Pi_{i=0}^k\, g_i(0)\cdot\Pi_{0\leq i<j\leq k}\, (\beta_i-\beta_j)\end{equation}
by Lemma \ref{lem:normalize}. In particular, 
$G_n\big(\beta_0,\cdots,\beta_n;g_0(z),\cdots, g_n(z);z\big)$ coincides with \\
$W\left(z^{\beta _0}g_0(z),\, z^{\beta _1}g_1(z),\,\cdots, z^{\beta_n}g_n(z)\right)$ since $\sum_{i=0}^n\,\beta_i=\frac{n(n+1)}{2}$.  

\begin{definition}
\label{equ:normalized}
We call that the $(n+1)$ holomorphic functions $g_0(z),\cdots , g_n(z)$ on $D$ satisfy the 
{\it normalized condition} if and only if 
$G_n\big(\beta_0,\cdots,\beta_n;g_0(z),\cdots, g_n(z);z\big)\equiv 1$ on 
$D$. 
In particular, $g_0,\cdots, g_n$ do not vanish at $0$ by \eqref{equ:G_k}.
\end{definition}

\begin{theorem}
\label{thm:model}
Let $u=(u_1,\cdots, u_n)$ be a solution to the ${\rm SU}(n+1)$ Toda system \eqref{equ:Toda}. Then we have the following two statements.
\begin{enumerate}
\item[(i)] There exist $(n+1)$ holomorphic functions $g_0,\cdots, g_n$ satisfying the normalized condition on $D$ such that for each $1\leq k\leq n$, 
\begin{equation}
\label{equ:curve_sol}
u_k=-\sum_{j=1}^n a_{kj}\log\| \Lambda_{j-1}(\nu)\|^2, \end{equation}
where $[\nu]=[\nu_0,\cdots,\nu_n]:D^*\to {\Bbb P}^n$ is the multi-valued holomorphic curve defined by
\begin{equation}
\label{equ:can}
z\mapsto \left[z^{\beta_0}g_0(z),\,z^{\beta_1}g_1(z),\,\cdots,\, z^{\beta_n}g_n(z)\right]\end{equation}
and the definition of $\Lambda_{i}(\cdot)$ will be given in Section 2.
In particular, $u_k$ equals $2\gamma_k\,\log\,|z|$ plus a H\" older continuous remainder $R_k$ near $0$, where
{\small 
\begin{eqnarray*}
R_k&=&-\sum\limits_{j=1}^n a_{kj}\log\, r_j\quad {\rm with}\\
r_j&=&\left| G_{j-1} 
\big(\beta_0, \beta_1,\cdots ,\beta_{j-1} ;
	g_0(z),g_1(z),\cdots ,g_{j-1}(z);z\big) \right|^{2}+\\
	&\sum\limits_{0 \le i_0 < i_1< \cdots < i_{j-1}\le n \atop i_{j-1}> j-1 }&
	\left| z \right| ^{2\left(\sum\limits_{l=0}^{j-1} \beta _{i_l}-\frac{(j-1)j}{2}+\alpha_j\right)}
	\left | G_{j-1}\big(\beta _{i_0},\beta _{i_1},\cdots,\beta_{i_{j-1}};g_{i_0}(z),g_{i_1}(z),\cdots,g_{i_{j-1}}(z);z\big)\right|^2.  
\end{eqnarray*}
}
Moreover, any curve in the form of \eqref{equ:can} can yield a solution $u=(u_1,\cdots, u_n)$ to \eqref{equ:Toda} through \eqref{equ:curve_sol}, even if the integral condition in \eqref{equ:Toda} is relaxed to $\frac{\sqrt{-1}}{2}\,\int_{0<|z|<r} e^{u_{i} }{\rm d}z\wedge {\rm d}\bar z < \infty$ for all $0<r<1$.

\item[(ii)] For all $1\leq k\leq n$, metrics $e^{u_k}|dz|^2$ have cone angle $2\pi(1+\gamma_k)$ at $z=0$. And there exist a complex coordinate change $z\mapsto \xi=\xi(z)$ near $z=0$ and preserving $0$, and $(n-1)$ holomorphic functions $\tilde g_2(\xi),\cdots, \tilde g_n(\xi)$ non-vanishing at $0$ such that these $n$ metrics 
near $0$ could be expressed in terms of these $(n-1)$ functions and $\{\beta_i\}_{i=0}^n$. In particular, 
$e^{u_1}|{\rm d}z|^2$ near $z=0$ could be simplified into the form of
{\small 
\begin{equation*}
 \left | \xi \right | ^{2\gamma_1}\frac{(\beta_1-\beta_0)^2
		+\sum\limits_{0\le i_0< i_1\le n \atop i_1> 1} \left | \xi \right |^{2(\beta_{i_0}+\beta_{i_1}-1+\alpha_2)}
		\left | G_1\big(\beta_{i_0},\beta_{i_1};\tilde{g}_{i_0}(\xi),\tilde{g}_{i_1}(\xi);\xi \big) \right |^2 }
	{\Big(1+\left | \xi \right |^{2(\beta_1-\beta_0)}+\left | \xi \right |^{2(\beta_2-\beta_0)}\left |\tilde{g}_2(\xi) \right |^2
		+\left | \xi \right |^{2(\beta_n-\beta_0)}\left |\tilde{g}_n(\xi) \right |^2 \Big)^2 }
	|{\rm d}\xi|^2.    
\end{equation*}
}
 
\end{enumerate}
\end{theorem}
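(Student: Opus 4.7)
The plan is to use the order $n{+}1$ Fuchsian equation of Lin--Wei--Ye \cite{LWY:2012} as the bridge between solutions of \eqref{equ:Toda} and the $(n+1)$-tuples $(g_0,\ldots,g_n)$ of normalized holomorphic functions on $D$. Given any solution $u$, Lin--Wei--Ye attach to it a linear ODE on $D$ of order $n+1$ whose only finite singularity is $z=0$ and whose indicial equation at $0$ has roots exactly $\beta_0<\beta_1<\cdots<\beta_n$; since the successive differences $\beta_i-\beta_{i-1}=\gamma_i+1>0$, Frobenius theory produces a basis of single-valued holomorphic factors $y_i(z)=z^{\beta_i}g_i(z)$ with $g_i(0)\ne 0$. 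Ruling out logarithmic resonances when $\beta_i-\beta_j$ is an integer is the first non-routine input and uses the specific shape of the Lin--Wei--Ye equation. The Wronskian $W(y_0,\ldots,y_n)$ is then a nonzero constant by Abel's identity (the trace coefficient vanishes); a common rescaling of the $g_i$'s absorbs it into the normalized condition $G_n\equiv 1$.

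For part (i), the multi-valued map $[\nu]$ in \eqref{equ:can} is the developing map of this Fuchsian equation, and \eqref{equ:curve_sol} is (up to a harmonic correction eliminated by the normalization) precisely the infinitesimal Pl\"ucker formula for $[\nu]$ recalled in the introduction. The converse direction asserted in the last sentence of (i) is the same calculation run backwards using the Pl\"ucker identities, with the weakened integrability on each annulus $0<|z|<r$ ensuring well-posedness while the stronger condition on the entire punctured disk may fail near $|z|=1$. To extract the asymptotic $u_k=2\gamma_k\log|z|+R_k$ we expand via Cauchy--Binet:
\[\|\Lambda_{j-1}(\nu)\|^2=\sum_{0\le i_0<\cdots<i_{j-1}\le n}\bigl|W(z^{\beta_{i_0}}g_{i_0},\ldots,z^{\beta_{i_{j-1}}}g_{i_{j-1}})\bigr|^2,\]
and rewrite each minor via \eqref{equ:G} as $|z|^{2(\beta_{i_0}+\cdots+\beta_{i_{j-1}})-j(j-1)}|G_{j-1}|^2$. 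The distinguished index set $(0,1,\ldots,j-1)$ contributes the overall factor $|z|^{-2\alpha_j}$ thanks to $\beta_0+\cdots+\beta_{j-1}-j(j-1)/2=-\alpha_j$ (a direct consequence of \eqref{equ:beta}), while every other index set carries a strictly positive $|z|$-power since $\beta_0<\cdots<\beta_n$; factoring $|z|^{-2\alpha_j}$ out of the sum yields exactly the $r_j$ in the statement. Substituting $\log\|\Lambda_{j-1}(\nu)\|^2=-2\alpha_j\log|z|+\log r_j$ into \eqref{equ:curve_sol} and using $\sum_j a_{kj}\alpha_j=\gamma_k$ (immediate from $\alpha_i=\sum_j a^{ij}\gamma_j$) isolates the coefficient $2\gamma_k$ of $\log|z|$ and identifies $R_k$.

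For part (ii), the cone-angle statement is immediate from the asymptotic just obtained. To produce the simplified metric, choose the holomorphic coordinate $\xi=z\bigl(g_1(z)/g_0(z)\bigr)^{1/(\beta_1-\beta_0)}$ near $0$, which is biholomorphic and preserves $0$ because $g_0(0),g_1(0)\ne 0$ and $\beta_1-\beta_0>0$. Then $z^{\beta_1}g_1/(z^{\beta_0}g_0)=\xi^{\beta_1-\beta_0}$, and for $i\ge 2$ one has $z^{\beta_i}g_i/(z^{\beta_0}g_0)=\xi^{\beta_i-\beta_0}\tilde g_i(\xi)$ for some holomorphic $\tilde g_i$ non-vanishing at $0$. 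Because the ratio $\|\Lambda_{k-1}(\nu)\|^2/\|\nu\|^{2k}$ is invariant under projective rescaling $\nu\mapsto\lambda\nu$ by any holomorphic $\lambda$ (a short wedge-product check), we replace $\nu$ with the local representative $\tilde\nu(\xi)=\bigl(1,\xi^{\beta_1-\beta_0},\xi^{\beta_2-\beta_0}\tilde g_2(\xi),\ldots,\xi^{\beta_n-\beta_0}\tilde g_n(\xi)\bigr)$ in the ratios extracted from \eqref{equ:curve_sol}, absorbing $|dz/d\xi|^2$ into the $\tilde g_i$'s. Specialising to $k=1$ gives $e^{u_1}|dz|^2=(\|\Lambda_1(\tilde\nu)\|^2/\|\tilde\nu\|^4)|d\xi|^2$, and the Cauchy--Binet expansion of $\|\Lambda_1(\tilde\nu)\|^2$ reproduces the displayed formula; the general $k$ is identical. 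The main obstacles are the combinatorial bookkeeping of the $|z|$-exponents in paragraph 2 (in particular the strict-positivity check when $i_{j-1}>j-1$) and the Frobenius step of paragraph 1, which is the deepest input and where the specific structure of the Lin--Wei--Ye equation is essential.
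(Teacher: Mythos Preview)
Your overall architecture matches the paper's, and the Cauchy--Binet bookkeeping in your second paragraph, as well as the coordinate change in part (ii), are essentially what the paper does (the paper writes $\tilde\nu(\xi)=(\xi^{\beta_0},\xi^{\beta_1},\xi^{\beta_2}\tilde g_2,\ldots)$ and appeals to coordinate-invariance of the pullback $[\nu]^*\omega_{\mathrm{FS}}$ rather than projective-rescaling invariance of the ratio, but these are equivalent).

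There is, however, a genuine gap in your first paragraph, and it propagates into the justification of \eqref{equ:curve_sol}. You attribute the absence of logarithmic terms to ``the specific shape of the Lin--Wei--Ye equation,'' but this is not where it comes from, and the paper does not argue that way. The crucial input is that the solution $u$ already comes equipped with a \emph{unitary} curve $f_0=[\hat f_0]:D^*\to\mathbb{P}^n$ (built from the Toda map $\phi:D^*\to\mathrm{SU}(n+1)$; this is Section~2), and it is the components of $\hat f_0$ that form a fundamental system of the Fuchsian equation. Because the monodromy of $\hat f_0$ lies in $\mathrm{SU}(n+1)$, it is diagonalizable; after conjugating by a unitary matrix one may assume the monodromy is $\mathrm{diag}(e^{2\pi i b_0},\ldots,e^{2\pi i b_n})$, so each component has the form $z^{b_j}\psi_j(z)$ with $\psi_j$ single-valued holomorphic on $D^*$. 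It is \emph{this} that forces the Frobenius solutions to be log-free when exponents differ by integers.

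The same omission creates a second problem: you assert that ``the multi-valued map $[\nu]$ is the developing map of this Fuchsian equation, and \eqref{equ:curve_sol} is \ldots\ precisely the infinitesimal Pl\"ucker formula,'' but \eqref{equ:curve_sol} holds for the \emph{specific} curve $f_0$ coming from the Toda map, not for an arbitrary Frobenius basis. Two fundamental systems of the same ODE differ by some $M\in\mathrm{GL}(n+1,\mathbb{C})$, and $\|\Lambda_{j-1}\|^2$ is only preserved when $M$ is unitary. The paper closes this gap (Section~3, Step~2) by observing that, once the monodromy of $\hat f_0$ is diagonal, the transition matrix $M$ from the Frobenius basis to $\hat f_0$ must be block-diagonal (blocks indexed by exponent classes mod $\mathbb{Z}$), and then performing a QR-type factorization $C_j=B_jA_j$ on each block: the lower-triangular $B_j$ is absorbed into the $g_i$'s (preserving the form $z^{\beta_i}g_i$ since $\beta_1^{(j)}<\cdots<\beta_{i_j}^{(j)}$), and the remaining unitary $A=\mathrm{diag}(A_1,\ldots,A_k)$ is exactly what lets you replace $f_0$ by $[\nu]$ in \eqref{equ:curve_sol}. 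Without this step your argument does not actually link the Frobenius-form curve to $u$.
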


\begin{remark}
Theorem 2.1 (i) refines the asymptotic estimate around a singular source of solutions to ${\rm SU}(n+1)$ Toda system  in \cite[Lemma 2.1]{JW:2002} and \cite[Theorem 1.3 (i)]{LWY:2012} to the effect that it gives the bounded remainders of $u_k$'s explicitly, which are actually H\" older continuous at $0$ and smooth outside $0$.
\end{remark}

\begin{remark}
\label{rem:global_analytic}
In this note, we utilize $z^\beta$ and $z^\beta \log\, z$, two multi-valued analytic functions with $\beta\in {\Bbb R}$ on $D^*$.  Following Ahlfors (\cite[Section 8.1.]{Ahlfors:1978}), they fall under the category of \textit{global analytic functions}, having analytic germs at each point in $D^*$. 
In particular, the values derived from germs of $\sqrt{z}\log\, z$ at $z=\frac{1}{2}$ form a countable unbounded subset 
$
\left\{(-1)^m\frac{\sqrt{2}}{2}\big(-\ln\, 2+2\pi m\sqrt{-1}\big): m\in \mathbb{Z}\right\}
$ of ${\Bbb C}$. 
\end{remark}

We conclude the introduction by elucidating the structure of the subsequent three sections of this manuscript. In Section 2, considering a not-necessarily simply connected domain $\Omega\subset {\Bbb C}$, we establish a correspondence between solutions to the ${\rm SU}(n+1)$ Toda system on $\Omega$ and totally unramified unitary curves $\Omega\to {\Bbb P}^n$ (see Definition \ref{defn:unitary} and Lemma \ref{lem:Toda_map}). This correspondence is such that the solutions are induced by the infinitesimal Pl\"ucker formulae of the curves, a generalization of the simply connected case employed by Jost-Wang \cite[Section 3]{JW:2002}, based on \cite[Section 2.4]{GH:1994}. 
In Section 3, utilizing the ordinary differential equation of $(n+1)$th order around $z=0$ as discovered by Lin-Wei-Ye \cite{LWY:2012}, we establish the first part of Statement (i) of Theorem \ref{thm:model}. This part asserts that a solution $u$ to \eqref{equ:Toda} is induced by the infinitesimal Pl\"ucker formulae of the canonical unitary curve 
$z\mapsto \left[z^{\beta_0}g_0(z),\,z^{\beta_1}g_1(z),\,\cdots,\, z^{\beta_n}g_n(z)\right]$ on $D^*$,
where $g_0,\cdots, g_n$ are $(n+1)$ holomorphic functions satisfying the normalized condition on $D$. 
The last section is dedicated to proving the remaining part of Theorem \ref{thm:model} by applying the infinitesimal Pl\"ucker formulae to this canonical unitary curve.

\section{Correspondence between curves and solutions}
Jost-Wang \cite[Section 3]{JW:2002} established a correspondence between
solutions to ${\rm SU}(n+1)$ Toda system on a simply connected domain in ${\Bbb C}$ 
and totally-unramified holomorphic curves from this domain to ${\Bbb P}^n$.
In this section, we generalize their correspondence to a not-necessarily simply connected domain 
$\Omega\subset {\Bbb C}$. Before the statement of the more general correspondence, we prepare some notations as follows, where we 
use Griffiths-Harris \cite[Section 2.4]{GH:1994} as a general reference.

\begin{definition}
\label{defn:unitary}
We generalize the concept of associated  curves in \cite[pp.263-264]{GH:1994} to the multi-valued case in the following: 
\begin{enumerate}
    \item We call $f:\Omega\to {\Bbb P}^n$ a {\it projective holomorphic curve} if and only if it satisfies the following three
conditions:
\begin{enumerate}
\item[(i)] $f$ is a multi-valued holomorphic map; 

\item[(ii)] $f$ is non-degenerate, i.e. the image of a germ ${\frak f}_z$ of $f$ at any point $z\in \Omega$ is not contained in a hyperplane of ${\Bbb P}^n$; and

\item[(iii)] the monodromy representation of $f$ is a group homomorphism 
${\mathcal M}_f:\pi_1(\Omega,\, B)\to {\rm PSL}(n+1,\,{\Bbb C})$,
where ${\rm PSL}(n+1,\, {\Bbb C})$ is the holomorphic automorphism group of ${\Bbb P}^n$
(\cite[pp.64-65]{GH:1994}) and $B\in\Omega$ is a base point. We also say that $f$ {\it has monodromy
in} ${\rm PSL}(n+1,\, {\Bbb C})$  briefly. 
\end{enumerate}

\item We call such a curve $f$ {\it unitary} if and only if it has monodromy in 
${\rm PSU}(n+1)$, which is the group of rigid motions with respect to the Fubini-Study metric
$\omega_{\rm FS}=\frac{\sqrt{-1}}{2\pi}\,\partial\overline{\partial}\,\log\, \|Z\|^2$
with $Z\in {\Bbb C}^{n+1}-\{0\}$ 
on ${\Bbb P}^n={\Bbb P}({\Bbb C}^{n+1})$ (\cite[pp.30-31]{GH:1994}). 
Mimicking the definition in \cite[pp.263-264]{GH:1994}, for a unitary curve 
$f:\Omega\to {\Bbb P}^n$, we could define its {\it $k$th associated curve}
\[f_k:\Omega\to G(k+1,\, n+1)\subset {\Bbb P}\big(\Lambda^{k+1}{\Bbb C}^{n+1}\big)\quad {\rm for\,\, all}\quad k=0,1,\cdots, (n-1),\]
which are also unitary curves.

\item We call a unitary curve $f:\Omega\to {\Bbb P}^n$ {\it totally un-ramified} if and only if
for each point $z\in \Omega$, each germ $\frak f$ of $f$ is totally un-ramified, i.e. there exists
a lifting $\widehat{{\frak f}}:U_z\to {\Bbb C}^{n+1}$ of $\frak f$ such that its $n$th associated curve 
\[ \widehat{{\frak f}}\wedge \widehat{{\frak f}}'(z)\wedge\cdots\wedge 
\widehat{{\frak f}}^{(n)}(z): U_z \to \Lambda^{n+1}\big({\Bbb C}^{n+1}\big) \] 
equals $e_0\wedge e_1\wedge\cdots\wedge e_n$ identically, where $U_z\subset \Omega$ is some open neighborhood of $z$ and $\{e_0,\cdots, e_n\}$ is the standard ortho-normal basis of ${\Bbb C}^{n+1}$. Hence, the $n$th associated curve
$f_n$ of $f$ is also well defined. Note that a totally un-ramified curve must be non-degenerate.

\end{enumerate} 
\end{definition}

We observe that the infinitesimal Pl\" uck formulae \cite[p.269]{GH:1994} also hold for unitary curves beside
single-valued holomorphic curves and they induce solutions to ${\rm SU}(n+1)$ Toda system 
in the following:

\begin{lemma}
\label{lem:curve_Omega}
Let $f:\Omega\to {\Bbb P}^n$ be a unitary curve and $f_0:=f,\, f_1,\cdots, f_{n-1}$ its associated curves. Let $\frak f$ be a germ of $f$ and $\hat{\frak f}$ be one of its lifting. 
Then $\Lambda_k(\frak f,\, z)=
\hat{\frak f}(z)\wedge\hat{\frak f}'(z)  \cdots \wedge \hat{\frak f}^{(k)}(z)\in \Lambda^{k+1}{\Bbb C}^{n+1}$ is a lifting of
some germ ${\frak f}_k$ of $f_k$. Endow $\Lambda^{k+1}\big({\Bbb C}^{n+1}\big)$'s  
with induced metrics from $\big({\Bbb C}^{n+1},\,\|\cdot\|\big)$ for $k=0,1,\cdots, n$, and
set $\|\Lambda_{-1}\|\equiv 1$. 
\begin{enumerate}
\item[(i)] {\rm (Infinitesimal Pl\" uck formula)} For $k=0,1,\cdots,(n-1)$, we have 
\begin{equation}
\label{equ:Plucker}
f^*_k\omega_{\rm FS}=\frac{\sqrt{-1}}{2\pi}\,\frac{\|\Lambda_{k-1}(f)\|^2\cdot\|\Lambda_{k+1}(f)\|^2 }{\|\Lambda_k(f)\|^4}\, {\rm d}z\wedge {\rm d}\bar z,
\end{equation}
where we write the notion of $\Lambda_{\cdot}(f)$ on purpose since 
$\frac{\|\Lambda_{k-1}(f)\|^2\cdot\|\Lambda_{k+1}(f)\|^2 }{\|\Lambda_k(f)\|^4}$
on the right-hand side does not depend on the choice of the lifting $\hat{\frak f}$ of $f$.

\item[(ii)] {\rm (From curves to solutions)}  Assume furthermore that the unitary curve $f:\Omega\to{\Bbb P}^n$ is totally un-ramified. Then we could choose the lifting $\hat{\frak f}$ of germ $\frak f$ of $f$ in {\rm (i)} such that 
$$\Lambda_n(\frak f,\,z)=\hat{\frak f}(z)\wedge\hat{\frak f}'(z)  \cdots \wedge \hat{\frak f}^{(n)}(z)\equiv 
e_0\wedge\cdots\wedge e_n\in \Lambda^{n+1}{\Bbb C}^{n+1}\quad {\rm on}\quad \Omega.$$
In particular, $\|\Lambda_n\|\equiv 1$.
Then it induces a solution
$u=(u_1,\cdots, u_n)$ to the ${\rm SU}(n+1)$ Toda system 
\begin{equation}
\label{equ:Toda_Omega}
 \frac{\partial^2 u_i}{\partial z\partial \bar z}+\sum_{j=1}^n a_{ij} e^{u_j}=0\quad {\rm on}\quad \Omega\quad 
 {\rm for\,\, all}\quad i=1,\cdots, n. 
\end{equation}
in such a way that
\begin{equation}
\label{equ:sol_Omega}
u_i:=-\sum_{j=1}^na_{ij}\log\| \Lambda_{j-1}(f)\|^2
=\begin{cases}
\log\,\frac{\|\Lambda_{1}(f)\|^2}{\|\Lambda_{0}(f)\|^4}\quad &{\rm for}\quad i=1, \\
\log\,\frac{\|\Lambda_{i-2}(f)\|^2\cdot\|\Lambda_{i}(f)\|^2 }{\|\Lambda_{i-1}(f)\|^4}
\quad &{\rm for\,\, all}\quad i=2,\,3,\,\cdots\,, n-1,\\
\log\,\frac{\|\Lambda_{n-2}(f)\|^2}{\|\Lambda_{n-1}(f)\|^4}\quad &{\rm for}\quad i=n.
\end{cases} 
\end{equation}

\end{enumerate}
\end{lemma}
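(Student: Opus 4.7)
\emph{Strategy.} My plan is to treat both formulae as local statements on $\Omega$. I would fix a point $z_0\in\Omega$, a germ $\frak f$ of $f$ at $z_0$, and one of its holomorphic lifts $\hat{\frak f}:U_{z_0}\to{\Bbb C}^{n+1}\setminus\{0\}$, and carry out all computations on $U_{z_0}$ as if $f$ were single-valued. Globalization to $\Omega$ then reduces to checking that the relevant quantities are invariant under (a) rescaling the lift by a nowhere-zero holomorphic function $\lambda(z)$, which by the Leibniz rule and row operations on the defining determinant sends $\Lambda_k(\hat{\frak f}) = \hat{\frak f}\wedge\hat{\frak f}'\wedge\cdots\wedge\hat{\frak f}^{(k)}$ to $\lambda^{k+1}\Lambda_k(\hat{\frak f})$; and (b) the monodromy action, which by hypothesis lies in ${\rm PSU}(n+1)$ and therefore acts by isometries on each exterior power $\Lambda^{k+1}{\Bbb C}^{n+1}$.

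\emph{Part (i).} The combination $\|\Lambda_{k-1}\|^2\|\Lambda_{k+1}\|^2/\|\Lambda_k\|^4$ picks up a factor $|\lambda|^{2k}\cdot|\lambda|^{2(k+2)}/|\lambda|^{4(k+1)}=1$ under (a) and is preserved under (b), so it descends to a single-valued expression on $\Omega$. The identity \eqref{equ:Plucker} is then the classical infinitesimal Pl\"ucker formula \cite[p.~269]{GH:1994} applied to the single-valued lift on $U_{z_0}$. Equivalently, I would verify it directly from $\omega_{\rm FS} = \tfrac{\sqrt{-1}}{2\pi}\partial\bar\partial\log\|Z\|^2$, which gives $f_k^*\omega_{\rm FS} = \tfrac{\sqrt{-1}}{2\pi}\partial\bar\partial\log\|\Lambda_k(\hat{\frak f})\|^2$, and then expand $\partial_z\partial_{\bar z}\log\|\Lambda_k\|^2$ using standard wedge-product identities.

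\emph{Part (ii) and main obstacle.} For the totally un-ramified case, Definition~\ref{defn:unitary}(3) directly supplies a lift $\hat{\frak f}$ with $\Lambda_n(\hat{\frak f}) \equiv e_0\wedge\cdots\wedge e_n$, so $\|\Lambda_{-1}\|=\|\Lambda_n\|=1$; since this normalization pins down the lift up to $\lambda^{n+1}=1$, each $\|\Lambda_k(f)\|^2$ becomes a genuinely single-valued function on $\Omega$. Setting $v_j := \log\|\Lambda_{j-1}(f)\|^2$ with the conventions $v_0=v_{n+1}=0$, the Pl\"ucker formula combined with $f_{j-1}^*\omega_{\rm FS} = \tfrac{\sqrt{-1}}{2\pi}\partial\bar\partial v_j$ yields $\partial_z\partial_{\bar z}v_j = e^{v_{j-1}-2v_j+v_{j+1}}$ for $1\le j\le n$. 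Since the Cartan matrix is tridiagonal, $\sum_k a_{jk}v_k = 2v_j-v_{j-1}-v_{j+1}$, so $u_i := -\sum_j a_{ij}v_j$ gives $e^{u_j} = e^{v_{j-1}-2v_j+v_{j+1}}$ and hence
\[
\frac{\partial^2 u_i}{\partial z\,\partial\bar z} + \sum_{j=1}^n a_{ij}e^{u_j}
= -\sum_{j=1}^n a_{ij}\Big(\frac{\partial^2 v_j}{\partial z\,\partial\bar z} - e^{v_{j-1}-2v_j+v_{j+1}}\Big) = 0.
\]
I expect the main obstacle to be not the calculation itself but the bookkeeping of multi-valuedness: it is precisely the normalization $\Lambda_n\equiv e_0\wedge\cdots\wedge e_n$ together with the ${\rm PSU}(n+1)$ monodromy hypothesis that descends $\|\Lambda_k(f)\|^2$ to a single-valued function on $\Omega$ and makes each $u_i$ a well-defined solution of \eqref{equ:Toda_Omega}, thereby extending the simply-connected setup of Jost--Wang \cite{JW:2002}.
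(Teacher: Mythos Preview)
Your proposal is correct and follows essentially the same approach as the paper: reduce to the local single-valued situation by checking invariance of the relevant norms under both lift-rescaling and the ${\rm PSU}(n+1)$ monodromy, then invoke the classical Pl\"ucker formulae from \cite[pp.~269--270]{GH:1994} for (i) and the computation of \cite[Section~3.4]{JW:2002} for (ii). The paper's own proof is in fact just these two citations, so your write-up is a faithful (and more explicit) unpacking of it.
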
 
\begin{proof} Since $f$ and all its associated curves are unitary, the norm of 
$\Lambda_k(\frak f,\, z)=v(z)\wedge \cdots \wedge v^{(k)}(z)\in \Lambda^{k+1}{\Bbb C}^{n+1}$ 
does not depend on the choice of germ $\frak f$. Hence the infinitesimal Pl\" ucker formulae \eqref{equ:Plucker} follows from the same argument as in \cite[pp.269-270]{GH:1994}. Statement (ii) follows from these formulae and the same argument as in \cite[Section 3.4]{JW:2002}.
\end{proof}

Jost-Wang \cite[Section 2.1]{JW:2002} introduced the Toda map associated with a solution
to the ${\rm SU}(n+1)$ Toda system on a simply connected domain in ${\Bbb C}$.
To obtain our correspondence, we need to introduce the notion of multi-valued
Toda map on $\Omega\subset {\Bbb C}$. 
Let $u=(u_1,\cdots,u_n)$ be an $n$-tuple of real-valued smooth function on $\Omega$ and
the $(n+1)$-tuple $w=(w_0,\cdots, w_n)$ of functions on $\Omega$ be defined by
\begin{equation}
\begin{cases}
	w_0:=-\frac{\sum_{i=1}^n(n-i+1)u_i}{2(n+1)}\\
	w_i:=w_0+\frac{1}{2}\sum_{j=1}^iu_j,\quad 1\leq i\leq n.\label{1.3}
\end{cases}
\end{equation}
Then $u=(u_1,\cdots,u_n)$ solves the
SU(n+1) Toda system \eqref{equ:Toda_Omega} if and only if $w$ satisfies the Maurer-Cartan equation 
$\mathcal{U}_z-\mathcal{V}_{\bar{z}}=[\mathcal{U},\mathcal{V}]$,
where
\begin{equation}
\mathcal{U}=
\begin{pmatrix}
(w_0)_z&&&\\
&(w_1)_z&&\\
&&\ddots&\\
&&&(w_n)_z
\end{pmatrix}
+\begin{pmatrix}
0&&&\\
e^{w_1-w_0}&0&&\\
&\ddots&\ddots&\\
&&e^{w_n-w_{n-1}}&0
\end{pmatrix}
\nonumber
\end{equation}
and $\mathcal{V}=-{\mathcal U}^*=-\overline{{\mathcal U}}^{\rm T}$.
By using the Frobenius theorem and the analytic-continuation-like argument (See \cite[Section 3.1]{JW:2002} and \cite[Chapter 3]{sharpe2000differential}),
we obtain a set of {\it multi-valued} Toda maps $\phi:\Omega\to {\rm SU}(n+1)$ associated with solution $u$ of \eqref{equ:Toda_Omega} such that 
\begin{equation}
\label{equ:Toda_map}
\phi^{-1}{\rm d}\phi=\mathcal{U}{\rm d}z+\mathcal{V}{\rm d}\bar{z}
\end{equation}
and the monodromy of $\phi$ is a group homomorphism
${\mathcal M}_\phi:\pi_1(\Omega,\, B)\to {\rm SU}(n+1)$. Moreover, any two such Toda maps have the difference
of a constant multiple in ${\rm SU}(n+1)$ from the left-hand side, and the set of all the Toda maps associated with
$u$ is isomorphic to the quotient group ${\rm SU}(n+1)/{\rm Image}\big({\mathcal M}_\phi\big)$.

\begin{lemma}
\label{lem:Toda_map}
Suppose that $\phi:\Omega\to {\rm SU}(n+1)$ is a  multi-valued Toda map associated to a solution $u=(u_1,\cdots,u_n)$ of \eqref{equ:Toda_Omega}. Defining an $(n+1)$-tuple $(\hat{f}_0,\cdots,\hat{f}_n)$ of $\mathbb{C}^{n+1}$-multi-valued functions on $\Omega$ by
$$
(\hat{f}_0,\cdots,\hat{f}_n)=\phi\cdot
\begin{pmatrix}
e^{w_0}&&&\\
&e^{w_1}&&\\
&&\ddots&\\
&&&e^{w_n}
\end{pmatrix},
$$
we find that $f_0:=[\hat{f}_0]:\Omega\to {\Bbb P}^n$ is a totally un-ramified unitary curve on $\Omega$
which satisfies $\hat{f}_0\wedge \hat{f}_0^\prime\wedge \hat{f}_0^{(2)}\wedge\cdots\wedge \hat{f}_0^{(n)}=e_0\wedge\cdots\wedge e_n$. Moreover, $(u_1,\cdots,u_n)$ coincides with
the solution of \eqref{equ:Toda_Omega} constructed from the curve $f_0$ by \eqref{equ:sol_Omega}.
\end{lemma}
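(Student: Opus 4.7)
The plan is to differentiate column-by-column. Writing $\phi = (\phi_0,\phi_1,\ldots,\phi_n)$, the columns form a unitary orthonormal frame, so $\langle \phi_i,\phi_j\rangle = \delta_{ij}$ and $\det\phi = 1$. Reading $\phi^{-1}d\phi = \mathcal{U}\,dz + \mathcal{V}\,d\bar z$ column-wise and exploiting the diagonal-plus-subdiagonal shape of $\mathcal{U}$ (and the dual diagonal-plus-superdiagonal shape of $\mathcal{V} = -\mathcal{U}^*$, where reality of the $w_j$ makes the off-diagonal entries real), one obtains
$$\partial_z \phi_j = (w_j)_z\,\phi_j + e^{w_{j+1}-w_j}\,\phi_{j+1}, \qquad \partial_{\bar z}\phi_j = -(w_j)_{\bar z}\,\phi_j - e^{w_j - w_{j-1}}\,\phi_{j-1},$$
with the obvious boundary conventions at $j=0$ and $j=n$. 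Setting $\hat f_j := e^{w_j}\phi_j$, the terms in $\partial_{\bar z}(e^{w_0}\phi_0)$ cancel exactly, so $\hat f_0$ is holomorphic.

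Differentiating $\hat f_j$ in $z$ gives the clean recursion $\hat f_j' = 2(w_j)_z\,\hat f_j + \hat f_{j+1}$ for $0\le j\le n-1$. A straightforward induction on $k$ then yields $\hat f_0^{(k)} \equiv \hat f_k \pmod{\mathrm{span}\{\hat f_0,\ldots,\hat f_{k-1}\}}$ with leading coefficient $1$, so the change of frame from $(\hat f_0,\hat f_0',\ldots,\hat f_0^{(n)})$ to $(\hat f_0,\hat f_1,\ldots,\hat f_n)$ is unipotent upper-triangular. Consequently
$$\hat f_0 \wedge \hat f_0' \wedge \cdots \wedge \hat f_0^{(n)} \;=\; \hat f_0\wedge\hat f_1\wedge\cdots\wedge\hat f_n \;=\; \det(\phi)\cdot e^{\sum_{j=0}^n w_j}\, e_0\wedge e_1\wedge\cdots\wedge e_n.$$
Since $\phi \in \mathrm{SU}(n+1)$ and the definition \eqref{1.3} easily forces $\sum_{j=0}^n w_j = 0$, the right-hand side equals $e_0\wedge\cdots\wedge e_n$; this both proves the wedge identity in the statement and shows that $f_0$ is non-degenerate, hence totally un-ramified.

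For the unitary property I would combine non-degeneracy with the monodromy discussion preceding the lemma: the multi-valued $\phi$ has left monodromy in $\mathrm{SU}(n+1)$, and $w_0$ is a single-valued function of $u$, so $\hat f_0 = e^{w_0}\phi_0$ transforms as $\hat f_0 \mapsto A_\gamma\,\hat f_0$ with $A_\gamma\in\mathrm{SU}(n+1)$; passing to $\mathbb{P}^n$ puts $\mathcal{M}_{f_0}$ in $\mathrm{PSU}(n+1)$, so $f_0$ is a unitary curve in the sense of Definition \ref{defn:unitary}.

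Finally, to match the solution constructed from $f_0$ via \eqref{equ:sol_Omega}, I would use orthonormality of the columns of $\phi$ to compute
$$\|\Lambda_{j-1}(f_0)\|^2 = \|\hat f_0\wedge\cdots\wedge\hat f_{j-1}\|^2 = \prod_{i=0}^{j-1} e^{2w_i} = \exp\!\Bigl(2\sum_{i=0}^{j-1}w_i\Bigr),$$
and then verify, using the explicit tri-diagonal form of $(a_{ij})$, that $-\sum_{j=1}^n a_{ij}\log\|\Lambda_{j-1}(f_0)\|^2$ telescopes to $2(w_i - w_{i-1}) = u_i$ for every $i$, with the endpoint $i=n$ handled by $\sum_j w_j = 0$. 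The main obstacle is bookkeeping rather than any real difficulty: once $\mathcal{U}$ and $\mathcal{V}$ are expanded column-wise and $\sum w_j = 0$ is noted, the rest of the lemma reduces to short consequences of orthonormality and the defining relations \eqref{1.3}.
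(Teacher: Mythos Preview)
Your proposal is correct and follows essentially the same route as the paper: both arguments read the Maurer--Cartan equation $\phi^{-1}d\phi=\mathcal{U}\,dz+\mathcal{V}\,d\bar z$ column-wise to obtain the recursion $\hat f_j'=2(w_j)_z\hat f_j+\hat f_{j+1}$ (the paper writes this as $\partial_z(\hat{\frak f}_0,\ldots,\hat{\frak f}_n)=(\hat{\frak f}_1,\ldots,\hat{\frak f}_n,0)+(\hat{\frak f}_j\,\partial_z\log\|\hat{\frak f}_j\|^2)_j$, which is the same thing since $\|\hat f_j\|=e^{w_j}$), deduce the wedge identity by induction together with $\sum_j w_j=0$ and $\det\phi=1$, invoke the $\mathrm{SU}(n+1)$ monodromy of $\phi$ for unitarity, and recover $u_i$ from $\|\Lambda_{k}\|=\prod_{i\le k}e^{w_i}$ via the tridiagonal telescoping $2(w_i-w_{i-1})=u_i$. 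The only cosmetic difference is that the paper packages the derivative formulas at the level of the full tuple $(\hat{\frak f}_0,\ldots,\hat{\frak f}_n)$ rather than an explicit per-column computation.
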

\begin{proof}
Choose a germ $\varphi$ of $\phi:\Omega\to {\rm SU}(n+1)$. Since
$\frac{\partial \varphi}{\partial \bar z}=\varphi{\mathcal V}$ and 
\quad $\|\hat{f}_i\|=e^{w_i}$, 
it follows from direct computation that the germ $(\hat{\frak f}_0,\cdots,\hat{\frak f}_n)$ of  $(\hat{f}_0,\cdots,\hat{f}_n)$ satisfies 
\begin{eqnarray*}
\frac{\partial}{\partial \bar z}\left(\hat{\frak f}_0,\cdots,\hat{\frak f}_n\right)&=&
\left(0,\,\frac{\|\hat{\frak f}_1\|^2}{\|\hat{\frak f}_0\|^2}\hat{\frak f}_0,\, 
\frac{\|\hat{\frak f}_2\|^2}{\|\hat{\frak f}_1\|^2}\hat{\frak f}_1,
\cdots,\, \frac{\|\hat{\frak f}_n\|^2}{\|\hat{\frak f}_{n-1}\|^2}\hat{\frak f}_{n-1}\right),\quad {\rm and}\\
\frac{\partial}{\partial z}\left(\hat{\frak f}_0,\cdots,\hat{\frak f}_n\right)&=&
\left(\hat{\frak f}_1,\,\cdots, \hat{\frak f}_n,\,0\right)+
\left(\hat{\frak f}_0\frac{\partial}{\partial z}\log\,\|\hat{\frak f}_0\|^2,\,
\hat{\frak f}_1\frac{\partial}{\partial z}\log\,\|\hat{\frak f}_1\|^2,\cdots,\,
\hat{\frak f}_n\frac{\partial}{\partial z}\log\,\|\hat{\frak f}_n\|^2\right).
\end{eqnarray*}
By the first equation above, the germ $\hat{\frak f}_0$ of $\hat{f}_0$ is holomorphic. 
By the second one and induction argument, we obtain that 
\begin{equation}
\label{equ:k_wedge}
\hat{\frak f}_0\wedge\hat{\frak f}_0'\wedge\cdots\wedge \hat{\frak f}_0^{(k)}=
\hat{\frak f}_0\wedge\hat{\frak f}_1\wedge\cdots\wedge \hat{\frak f}_k   \end{equation}
for all $k=0,1,\cdots, n$. In particular, we can see
$\hat{\frak f}_0\wedge\hat{\frak f}_0'\wedge\cdots\wedge \hat{\frak f}_0^{(n)}\equiv e_0\wedge\cdots\wedge e_n$
by using the definition of $(\hat{f}_0,\cdots,\hat{f}_n)$ and $w_0+\cdots w_n=0$. Since $\phi$ has monodromy in 
${\rm SU}(n+1)$, $f_0=[\hat{f}_0]:\Omega\to {\Bbb P}^n$ is a totally un-ramified unitary curve.

Since $\hat{\frak f}_0,\cdots, \hat{\frak f}_n$ are mutually orthogonal, we find by using 
\eqref{equ:k_wedge} that 
\begin{equation}
\label{equ:k_norm}
\|\Lambda_k([\hat{f}_0])\|=\|\hat{\frak f}_0\wedge\hat{\frak f}_0'\wedge\cdots\wedge \hat{\frak f}_0^{(k)}\|
=\|\hat{\frak f}_0\wedge\hat{\frak f}_1\wedge\cdots\wedge \hat{\frak f}_k \|=
\|\hat{\frak f}_0\|\cdot\|\hat{\frak f}_1\|\cdots\|\hat{\frak f}_k\|.\end{equation}
In particular, $\|\Lambda_n([\hat{f}_0])\|=e^{w_0+\cdots+w_n}=1$. Since for all $i=1,\cdots, n$
\begin{eqnarray*}
u_i=2w_i-2w_{i-1}=2\left(\log\,\|\hat{\frak f}_i\|-\log\,\|\hat{\frak f}_{i-1}\|\right),
\end{eqnarray*}
by using \eqref{equ:k_norm} and direct computation, we obtain that $u=(u_1,\cdots, u_n)$ coincides with
the one in \eqref{equ:sol_Omega}.
\end{proof}

\begin{definition}
\label{defn:ass_curve}
We call $f_0:\Omega\to {\Bbb P}^n$ in Lemma \ref{lem:Toda_map} a {\it unitary curve associated with} solution $u=(u_1,\cdots, u_n)$ of the ${\rm SU}(n+1)$ Toda system. The monodromy of $f_0$ is induced by that of the multi-valued Toda map $\phi:\Omega\to {\rm SU}(n+1)$. 
Moreover, such a unitary curve is unique up to a rigid motion in $\big({\Bbb P}^n,\,\omega_{\rm FS}\big)$ 
(\cite[(4.12)]{Griffiths:1974}).
\end{definition}

\section{Canonical unitary curves associated with solutions}

In this section, we shall prove the former part of Theorem  \ref{thm:model} (i), which is restated in the following:

\begin{theorem}
\label{thm:can}
Let $u$ be a solution to \eqref{equ:Toda}. Then there exist
$(n+1)$  holomorphic functions $g_0,\cdots, g_n$ satisfying the normalized condition on $D$ such that 
the following unitary curve 
$[\nu]:D^*\to {\Bbb P}^n,\, z\mapsto \left[z^{\beta_0}g_0(z),\,z^{\beta_1}g_1(z),\,\cdots,\, z^{\beta_n}g_n(z)\right]$
is associated with $u|_{D^*}$ in the sense of Definition \ref{defn:ass_curve}.  We call such $[\nu]$'s {\rm canonical curves} associated with $u$.
\end{theorem}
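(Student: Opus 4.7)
The plan is to apply Lemma~\ref{lem:Toda_map} with $\Omega = D^*$ to obtain a totally un-ramified unitary curve $f_0 = [\hat f_0] : D^* \to \mathbb{P}^n$ associated with $u$, and then use the Fuchsian equation of Lin-Wei-Ye \cite{LWY:2012} to bring a distinguished lifting into canonical Frobenius form at $z = 0$. Concretely, I first pick a multi-valued Toda map $\phi : D^* \to \mathrm{SU}(n+1)$ attached to $u$ and let $\hat f_0 : D^* \to \mathbb{C}^{n+1}$ be the lift supplied by Lemma~\ref{lem:Toda_map}, so that $\hat f_0 \wedge \hat f_0' \wedge \cdots \wedge \hat f_0^{(n)} \equiv e_0 \wedge \cdots \wedge e_n$. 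Each of the $n+1$ scalar components of $\hat f_0$ is a local solution of the same linear $(n+1)$-st order ODE $L y = 0$ obtained via Cramer's rule from this Wronskian identity; by \cite[(7.1), (5.7)]{LWY:2012} this is precisely a Fuchsian equation on $D$ whose only singular point lies at $z = 0$.

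Next, I would run Frobenius analysis of $L y = 0$ at $z = 0$. The indicial exponents are exactly $\beta_0 < \beta_1 < \cdots < \beta_n$ of \eqref{equ:beta}; this can be read off from the leading coefficients of $L$ computed by Lin-Wei-Ye in terms of the singular data $\pi \gamma_i \delta_0$, and is compatible with $\beta_i - \beta_{i-1} = \gamma_i + 1 > 0$ and $\sum_i \beta_i = n(n+1)/2$. The analytically delicate step is ruling out logarithmic terms in the Frobenius expansion when some $\beta_i - \beta_j$ is a positive integer. Here I would exploit that the local monodromy of $\hat f_0$ around $0$ is the restriction of the monodromy of $\phi$, hence lies in the compact group $\mathrm{SU}(n+1)$ and is therefore semisimple, which classically forbids logarithms irrespective of resonances. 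This produces a fundamental system $\nu_i = z^{\beta_i} g_i(z)$, $i = 0, \ldots, n$, with each $g_i$ holomorphic at $0$ and $g_i(0) \neq 0$. Because the factor $z^{\beta_i}$ already carries the full local monodromy eigenvalue $e^{2\pi \sqrt{-1}\,\beta_i}$ of $\nu_i$, the ratio $g_i = z^{-\beta_i} \nu_i$ is single-valued on $D^*$; combined with the regularity of $L$ on $D^*$, each $g_i$ extends to a holomorphic function on all of $D$.

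Each scalar component of $\hat f_0$ is a $\mathbb{C}$-linear combination of the basis $\{\nu_0, \ldots, \nu_n\}$, so $\hat f_0 = A \cdot (\nu_0, \ldots, \nu_n)^{\mathrm T}$ for some constant $A \in \mathrm{GL}(n+1, \mathbb{C})$; combining the volume normalization $\hat f_0 \wedge \cdots \wedge \hat f_0^{(n)} \equiv e_0 \wedge \cdots \wedge e_n$ with $\sum_i \beta_i = n(n+1)/2$ shows that $\det A \cdot W(\nu_0, \ldots, \nu_n)$ is a nonzero constant, which I absorb into $(g_0, \ldots, g_n)$ to enforce the normalized condition $W \equiv 1$ of Definition~\ref{equ:normalized}. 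The candidate canonical curve $[\nu] = [\nu_0 : \cdots : \nu_n]$ then has monodromy in the diagonal unitary torus $\mathrm{diag}(e^{2\pi \sqrt{-1}\, \beta_i}) \subset \mathrm{PSU}(n+1)$; to conclude that $[\nu]$ is associated with $u$ (Definition~\ref{defn:ass_curve}) I need $A$ to lie in $\mathrm{U}(n+1)$ up to scalar. This is the main obstacle, and I would resolve it by noting that $A$ diagonalizes the unitary monodromy $M \in \mathrm{SU}(n+1)$ of $\hat f_0$: its columns are $M$-eigenvectors, mutually orthogonal whenever the corresponding $\beta_i$'s are nonresonant, and within each resonant eigenspace I retain enough freedom in the choice of Frobenius basis (using the absence of logarithms established above) to re-pick the $g_i$'s so that the columns of $A$ become orthonormal; a final rescaling of the $g_i$'s by unit-modulus scalars places $A$ in $\mathrm{SU}(n+1)$ while preserving the normalized condition.
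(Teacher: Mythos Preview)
Your proposal is correct and follows essentially the same route as the paper's proof: build the associated unitary curve via Lemma~\ref{lem:Toda_map}, identify its components as a fundamental system for the Lin--Wei--Ye Fuchsian equation, rule out logarithms using semisimplicity of the unitary monodromy, and then argue that the change-of-basis matrix $A$ between $\hat f_0$ and the Frobenius basis can be taken unitary. The paper makes your final step precise by writing each resonant block of $A$ as $(\text{lower-triangular})\cdot(\text{unitary})$ via a QR-type factorization and absorbing the triangular factor into the $g_i$'s---this is exactly the ``freedom in the choice of Frobenius basis'' you invoke (only lower-triangular changes preserve the form $z^{\beta_i}g_i$ with $g_i(0)\neq 0$), and spelling it out would complete your argument.
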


We cite the following lemma about the ordinary differential equation of $(n+1)$th order given by solution $u$, which was 
discovered by Lin-Wei-Ye \cite{LWY:2012}.

\begin{lemma}
Let $u$ be a solution to \eqref{equ:Toda}, and $f_0=[\hat{f}_0]$ the unitary curve associated with $u|_{D^*}$ obtained by Lemma \ref{lem:curve_Omega}. Then all the $(n+1)$ components of $\hat{f}_0$ form a set of fundamental solutions to the following ordinary differential equation of $(n+1)$th order
\begin{equation}
\label{equ:Fuchs}
y^{(n+1)}+\sum_{k=0}^{n-1}\, Z_{k+1}y^{(k)}=0\quad \text{on}\quad D^*
\end{equation}
which 
satisfies the following two properties{\rm:}
\begin{itemize}
    \item[(i)] The coefficients $Z_k$ are holomorphic on $D^*$ and have poles of order $\leq (n+2-k)$ for all $1\leq k\leq n$. Hence $0$ is the regular singularity of \eqref{equ:Fuchs}.
    \item[(ii)] $\beta_0,\,\beta_1,\,\cdots,\,\beta_n$ defined in \eqref{equ:beta} are the local exponents of  
    \eqref{equ:Fuchs} at $0$.
\end{itemize}
\end{lemma}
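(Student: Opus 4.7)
The plan is to build the operator from the lifted curve via the classical Wronskian construction, propagate $\mathrm{SU}(n+1)$-monodromy invariance to get single-valuedness on $D^*$, and extract the structure at $z=0$ from the asymptotic estimates for the Toda solution. Pick a germ $\hat{\mathfrak{f}}_0 = (y_0,\dots,y_n)^{\top}$ of $\hat f_0$. By Lemma \ref{lem:Toda_map} the totally un-ramified condition gives the standard Wronskian $W(y_0,\dots,y_n)\equiv 1$, so expanding
\[
\det\begin{pmatrix} y & y_0 & \cdots & y_n \\ y' & y_0' & \cdots & y_n' \\ \vdots & & & \vdots \\ y^{(n+1)} & y_0^{(n+1)} & \cdots & y_n^{(n+1)} \end{pmatrix}=0
\]
along its first column yields the unique monic order-$(n+1)$ linear ODE with fundamental system $y_0,\dots,y_n$. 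Its coefficients $Z_j$ are cofactor polynomials in the $y_k^{(i)}$, hence holomorphic where $\hat f_0$ is, and Abel's identity kills the $y^{(n)}$-coefficient because $W$ is constant. Any other germ of $\hat f_0$ arises from $\hat{\mathfrak{f}}_0$ by left multiplication by some $A\in\mathrm{SU}(n+1)$, i.e.\ merely a change of basis for the same $(n+1)$-plane of solutions, so each $Z_j$ is monodromy-invariant and descends to a single-valued holomorphic function on $D^*$.

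For the behavior at $0$, I would invoke the a priori estimate $u_i=2\gamma_i\log|z|+O(1)$ of \cite[Lemma 2.1]{JW:2002}. Combined with $w_i=w_{i-1}+u_i/2$ and $\|\hat f_0\|=e^{w_0}$, this gives $|y_k(z)|\le\|\hat f_0(z)\|=O(|z|^{\beta_0})$, and Cauchy estimates on shrinking annuli propagate the decay to every derivative, $y_k^{(j)}=O(|z|^{\beta_0-j})$. Plugging these bounds into the cofactor formulas for $Z_j$ (using $W\equiv 1$ in the denominator) yields $z^{n+2-j}Z_j=O(1)$ near $0$, proving $z=0$ is a regular singular point in the Fuchsian sense.

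Finally, to identify the exponents as $\beta_0,\dots,\beta_n$, I would combine the Fuchs sum relation $\sum_j\beta_j=n(n+1)/2$ (which comes for free from the vanishing of the $y^{(n)}$-coefficient, since the indicial polynomial starts with $\rho(\rho-1)\cdots(\rho-n)$) with a Frobenius expansion in a basis diagonalizing the monodromy $M$ at $0$: unitarity of $M\in\mathrm{SU}(n+1)$ forces its eigenvalues to be $e^{2\pi i\beta_j}$, so each $\beta_j$ is determined modulo $\mathbb{Z}$ and admits a local solution of the form $z^{\beta_j}h_j(z)$ with $h_j$ convergent. The integer shifts are then pinned down by matching against the pointwise identity $w_i-w_{i-1}=\gamma_i\log|z|+O(1)$, which forces the gaps $\beta_i-\beta_{i-1}=\gamma_i+1$ and, together with the sum relation, reconstructs \eqref{equ:beta}. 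I expect this last step—converting a PDE cone-singularity datum into the sharp integer part of each Frobenius leading exponent, rather than only its residue mod $\mathbb{Z}$—to be the main obstacle, and it is essentially the core of the Lin--Wei--Ye argument.
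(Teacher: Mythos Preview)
Your Wronskian construction of the operator and the monodromy-invariance argument for single-valuedness of the $Z_j$ on $D^*$ are fine. The gap is in the step you flag as routine, namely the pole-order bound for (i). From $|y_k|\le\|\hat f_0\|=O(|z|^{\beta_0})$ and Cauchy estimates you get $y_k^{(i)}=O(|z|^{\beta_0-i})$, and the cofactor giving the coefficient of $y^{(k)}$ is an $(n+1)\times(n+1)$ minor with rows indexed by $\{0,\dots,n+1\}\setminus\{k\}$. Bounding this entrywise yields only
\[
Z_{k+1}=O\Big(|z|^{(n+1)\beta_0-\frac{(n+1)(n+2)}{2}+k}\Big),
\]
which matches the Fuchsian bound $O(|z|^{-(n+1-k)})$ only when $\beta_0\ge n/2$. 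For generic $\gamma_i$ (already for $\gamma_i=0$) this fails, so the crude cofactor estimate cannot detect the determinant cancellation that actually makes $0$ regular singular. The missing ingredient is that the $Z_j$ are not arbitrary cofactors but specific differential polynomials (the $W$-invariants) in the $w_i$'s, whose structure, together with the estimate $u_i=2\gamma_i\log|z|+O(1)$ \emph{and} its derivatives from elliptic regularity, forces each $z^{n+2-j}Z_j$ to be bounded. This is exactly what Lin--Wei--Ye establish, and the paper simply cites their Lemmas 2.1 and 5.2 for both (i) and (ii) rather than reproving them.

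The paper's sketch also differs in logical order from yours: instead of starting from the components $\nu_i$ and building the ODE, it first shows (via \cite{LWY:2012}) that the \emph{real} function $f=\|\hat f_0\|^2=\sum_i|\nu_i|^2$ satisfies the Fuchsian equation $L(f)=0$ with the stated exponents, and then deduces $L(\nu_i)=0$ for every $i$ from the identity $0=\overline{L}L(f)=\sum_i|L(\nu_i)|^2$. So the analytic burden (pole orders, indicial roots) is carried entirely by the scalar $f$, and the passage to the vector $\hat f_0$ is a one-line positivity argument. Your route via the Wronskian determinant is more direct in spirit but, as it stands, would need the full $W$-invariant computation to close the regular-singularity step.
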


\begin{proof} The proof of this lemma is scattered throughout the first, second, fifth, and seventh sections  of 
Lin-Wei-Ye \cite{LWY:2012}.  We sketch it here for completeness. By the proof of \cite[Lemmas 2.1 and 5.2]{LWY:2012}, where
Lin-Wei-Ye used all the conditions in \eqref{equ:Toda}, we obtain that 
$f:=e^{2w_0}=\|\hat{f}_0\|^2$ with $(\hat{f}_0)^{\rm T}(z)=:\nu(z)=\big(\nu_0(z),\,\cdots,\, \nu_n(z)\big)$ satisfies equation \eqref{equ:Fuchs}, i.e.
$L(f)=f^{(n+1)}+\sum_{k=0}^{n-1}\, Z_{k+1}f^{(k)}=0$ on $D^*$,
whose local exponents are $\beta_0,\cdots, \beta_n$.
Hence $0=\overline{L}L(f)=\sum_{i=0}^n\, |L\big(\nu_i(z)\big)|^2$ and 
$L\big(\nu_i(z)\big)=0$ for all $0\leq i\leq n$. On the other hand, since the unitary curve
$f_0:D^*\to {\Bbb P}^n$ is totally un-ramified and then non-degenerate, $\nu_0(z),\cdots,\nu_{n-1}(z)$ and $\nu_n(z)$ are $(n+1)$ multi-valued holomorphic functions whose germs at each point of $D^*$ are linearly independent over ${\Bbb C}$. 
Hence $\{\nu_i\}_{i=0}^n$ is a set of fundamental solutions of \eqref{equ:Fuchs}.
\end{proof}

{\sc Proof of Theorem \ref{thm:can}}:  It suffices to show that {\it there exists a matrix $A$ in 
${\rm SU}(n+1)$ such that 
$\nu(z) A:=\big(\hat{f}_0(z)\big)^{\rm T} A=
\left(z^{\beta_0}g_0(z),\,z^{\beta_1}g_1(z),\,\cdots,\, z^{\beta_n}g_n(z)\right)$
for some $(n+1)$ holomorphic functions $g_0,\cdots, g_n$,} which satisfy the normalized condition automatically since $\Lambda_n(\nu)=\Lambda_n(\nu\cdot A)\equiv e_0\wedge e_1\wedge\cdots\wedge e_n$ on $D^*$.
We prove it via the following two steps.

{\it Step 1.}  Choose base point $B\in D^*$ and generator $\gamma_B$ of $\pi_1(D^*,\, B)$. 
Since for each $A\in {\rm SU}(n+1)$, the unitary curve $[\nu\cdot A]: D^*\to {\Bbb P}^n$ is also associated with $u$ and has monodromy representation conjugate to that of $[\nu]$ by $A$, 
we assume without loss of generality that the monodromy representation $\mathcal{M}_\nu$ of $\nu$ maps $\gamma_B$ to the  diagonal matrix ${\rm diag}\,\left(e^{2\pi\sqrt{-1}b_0} ,\, e^{2\pi\sqrt{-1}b_1} \,\,\cdots,\,e^{2\pi\sqrt{-1}b_n}  \right)\in {\rm SU}(n+1)$ with $b_0,\cdots, b_n\in {\Bbb R}$.
Hence there exist holomorphic functions $ \psi_0,\psi_1,\cdots,\psi_n $ on $D^*$ such that 
\begin{equation}
\label{equ:unitary}
\nu(z)=\big(\nu_0(z),\, \nu_1(z),\,\cdots,\,\nu_n(z)\big)
=\left(z^{b_0}\psi_0(z),\, z^{b_1}\psi_1(z),\, \cdots,\, z^{b_n}\psi_n(z)\right).
\end{equation}

{\it Step 2.} We divide $\beta_0<\beta_1<\cdots<\beta_n$ into the 
following $k$ groups
\begin{equation*} \beta_1^{(1)},\beta_2^{(1)},...,\beta_{i_1}^{(1)};\quad\beta_1^{(2)},\beta_2^{(2)},...,\beta_{i_2}^{(2)};
\quad\cdots;\quad\beta_1^{(k)},\beta_2^{(k)},...,\beta_{i_k}^{(k)} \end{equation*}
such that in one of these groups, each local exponent differs from the other
by integers and the local exponents are in strictly ascending order; 
and any two local exponents lying in different groups are mutually distinct modulo ${\Bbb Z}$. 
Recall that $\beta_0<\beta_1< \cdots<\beta_n$ are all the local exponents of the $(n+1)$ order linear differential equation \eqref{equ:Fuchs},
of which $0$ is a regular singularity. By using the Frobenius method \cite[Section 3.4.1]{Kohno:1999}, we have the following set
of fundamental solutions of this equation on $D^*$:
{\small 
\begin{equation}
\label{equ:Frob}
	\left\{\begin{array}{l}
		y_1^{(1)}(z,\beta_{i_1}^{(1)})=z^{\beta_{i_1}^{(1)}}Y\big(z,\beta_{i_1}^{(1)}\big)\\
		y_2^{(1)}(z,\beta_{{i_1}-1}^{(1)})=z^{\beta_{{i_1}-1}^{(1)}}\left(\frac{\partial}{\partial \beta}Y\big(z,\beta_{{i_1}-1}^{(1)}\big)+Y\big(z,\beta_{{i_1}-1}^{(1)}\big)\log z \right)\\
		\vdots\\
		y_{i_1}^{(1)}(z,\beta_1^{(1)})=z^{\beta_1^{(1)}}\left(\frac{\partial^{{i_1}-1}}{\partial \beta^{{i_1}-1}}Y\big(z,\beta_1^{(1)}\big)+
		C_{{i_1}-1}^1 \frac{\partial^{{i_1}-2}}{\partial \beta^{{i_1}-2}}Y\big(z,\beta_1^{(1)}\big)\log z+\cdots+Y\big(z,\beta_1^{(1)}\big)(\log z)^{{i_1}-1}\right)\\
		\vspace{1pt}\\
		y_1^{(2)}(z,\beta_{i_2}^{(2)})=z^{\beta_{i_2}^{(2)}}Y\big(z,\beta_{i_2}^{(2)}\big)\\
		y_2^{(2)}(z,\beta_{{i_2}-1}^{(2)})=z^{\beta_{{i_2}-1}^{(2)}}\left(\frac{\partial}{\partial \beta}Y\big(z,\beta_{{i_2}-1}^{(2)}\big)+Y\big(z,\beta_{{i_2}-1}^{(2)}\big)\log z \right)\\
		\vdots\\
		y_{i_2}^{(2)}(z,\beta_1^{(2)})=z^{\beta_1^{(2)}}\left(\frac{\partial^{{i_2}-1}}{\partial \beta^{{i_2}-1}}Y\big(z,\beta_1^{(2)}\big)+
		C_{{i_2}-1}^1 \frac{\partial^{{i_2}-2}}{\partial \beta^{{i_2}-2}}Y\big(z,\beta_1^{(2)}\big)\log z+\cdots+Y\big(z,\beta_1^{(2)}\big)(\log z)^{{i_2}-1}\right)\\
		\vspace{1pt}\\ \vdots\\ \vspace{1pt}\\
		y_1^{(k)}(z,\beta_{i_k}^{(k)})=z^{\beta_{i_k}^{(k)}}Y\big(z,\beta_{i_k}^{(k)}\big)\\
		y_2^{(k)}(z,\beta_{{i_k}-1}^{(k)})=z^{\beta_{{i_k}-1}^{(k)}}\left(\frac{\partial}{\partial \beta}Y\big(z,\beta_{{i_k}-1}^{(k)}\big)+Y\big(z,\beta_{{i_k}-1}^{(k)}\big)\log z \right)\\
		\vdots\\
		y_{i_k}^{(k)}(z,\beta_1^{(k)})=z^{\beta_1^{(k)}}\left(\frac{\partial^{{i_k}-1}}{\partial \beta^{{i_k}-1}}Y\big(z,\beta_1^{(k)}\big)+
		C_{{i_k}-1}^1 \frac{\partial^{{i_k}-2}}{\partial \beta^{{i_k}-2}}Y\big(z,\beta_1^{(k)}\big)\log z+\cdots+Y\big(z,\beta_1^{(k)}\big)(\log z)^{{i_k}-1}\right)\\
	\end{array}\right.\, ,
\end{equation}
}
where $ Y(z,\beta) $ is holomorphic with respect to both $ z $ and $ \beta $, and 
\[\frac{\partial^{i_j-m_j}}{\partial \beta^{i_j-m_j}}Y\big(0,\beta_{m_j}^{(j)}\big) \neq 0\quad \text{for all}\quad 
1\leq j\leq k\quad \text{and}\quad 
1\leq m_j\leq i_j.\]
For all $0\leq \ell\leq n$, since $b_\ell\in {\Bbb R}$, all the germs of the multi-valued holomorphic function $\nu_\ell(z)=z^{b_\ell}\psi_\ell(z)$ have the same norm and are uniformly bounded at $B$.
Since each function $y_{m_j}^{(j)}\big(z,\,\beta_{m_j}^{(j)}\big)$ in \eqref{equ:Frob} is a complex linear combination of $\nu_0(z),\cdots, \nu_n(z)$, 
all its germs are also uniformly bounded at $B$.
This precludes the potential presence of any logarithmic terms within these germs, as discussed in Remark \ref{rem:global_analytic}.
That is, for all $ j=1,2,\cdots,k$ and $ m_j=1,2,\cdots,i_j $, the functions in \eqref{equ:Frob} actually have form  $y_{m_j}^{(j)}\big(z,\,\beta_{m_j}^{(j)}\big)=z^{\beta_{m_j}^{(j)}}\phi_{m_j}^{(j)}(z)$, 
where  $ \phi_{m_j}^{(j)}(z) $ are holomorphic  on $D$ such that $ \phi_{m_j}^{(j)}(0) \neq 0 $. 
Hence, there exists $M\in {\rm GL}(n+1,\,{\Bbb C})$ such that 
\begin{eqnarray*}
\nu(z)&=&\big(\nu_0(z),\, \nu_1(z),\,\cdots,\,\nu_n(z)\big)=\big(z^{b_0}\psi_0(z),\, z^{b_1}\psi_1(z),\, \cdots,\, z^{b_n}\psi_n(z)\big)\\
&=&\big( y_1^{(1)},y_2^{(1)},...,y_{i_1}^{(1)};\, y_1^{(2)},y_1^{(2)},...,y_{i_2}^{(2)};\,\cdots;\, y_1^{(k)},y_2^{(k)},...,y_{i_k}^{(k)} \big)\cdot M.
\end{eqnarray*}
Given $1\leq j\leq k$, the multi-valued functions $y_{m_j}^{(j)}(z)$ have the same monodromy mapping $\gamma_B$ to multiple 
$e^{2\pi\sqrt{-1}\beta_{m_j}^{(j)}}$ for all $m_j=1,2,\cdots,i_j$.
Recall that the monodromy of the set $ (\nu_0,\, \nu_1,\, \cdots,\,\nu_n) $ of fundamental solutions 
to \eqref{equ:Fuchs} maps $\gamma_B$ to ${\rm diag}\,\left(e^{2\pi\sqrt{-1}b_0} ,\, e^{2\pi\sqrt{-1}b_1} \,\,\cdots,\,e^{2\pi\sqrt{-1}b_n}  \right)$. Adjusting the order of 
$\nu_0(z),\cdots, \nu_n(z)$ if necessary,  we obtain that
$\big(\nu_0(z),\, \nu_1(z),\,\cdots,\,\nu_n(z)\big)$ equals \begin{eqnarray*}
\big( y_1^{(1)},y_2^{(1)},...,y_{i_1}^{(1)};\, y_1^{(2)},y_1^{(2)},...,y_{i_2}^{(2)};\,\cdots;\, y_1^{(k)},y_2^{(k)},...,y_{i_k}^{(k)} \big)\cdot 
{\rm diag}(C_1,\cdots, C_k), 
\end{eqnarray*}
for some $C_1,\cdots, C_k\in {\rm GL}(i_j,\, {\Bbb C})$.
For all $j=1,\cdots, k$, we rewrite $C_j$ as the product $C_j=B_j A_j $, where $ B_j $ is a lower triangular matrix and $ A_j $ is a unitary matrix. Recalling $\beta_1^{(j)}<\beta_2^{(j)}<\cdots<\beta_{i_j}^{(j)}$, 
we may assume that the lower triangular matrix $B_j=I_{i_j}$ and $C_j=A_j$ since 
$$ \Big(z^{\beta_1^{(j)}}\phi_1^{(j)}(z),z^{\beta_2^{(j)}}\phi_2^{(j)}(z),\cdots,z^{\beta_{i_j}^{(j)}}\phi_{i_j}^{(j)}(z)\Big) \cdot B_j=\Big(z^{\beta_1^{(j)}}g_1^{(j)}(z),\, z^{\beta_2^{(j)}}g_2^{(j)}(z),\,\cdots,\, z^{\beta_{i_j}^{(j)}}g_{i_j}^{(j)}(z)\Big),$$ 
where $g_1^{(j)}(z),\,\cdots,\,g_{i_j}^{(j)}(z)$ are holomorphic functions on $D$ non-vanishing at $0$. 
We are done by taking 
$A={\rm diag}(A_1,\cdots, A_k)$. $ \hfill{\Box} $

\section{Completion of the proof for Theorem 1.2.}
In the preceding section, we proved an important part of Theorem 1.2., i.e. the canonical unitary curve $[\nu(z)]=[\nu_0(z),\cdots, \nu_n(z)]$ is associated with solution
$u=(u_1,\cdots, u_n)$ to \eqref{equ:Toda}. We shall complete the proof
of the theorem in this section by applying both the infinitesimal Pl\" ucker formulae and the $D^*$-case of \eqref{equ:sol_Omega} 
to $\nu(z)$ and its associated curves 
$\nu(z)\wedge\nu'(z)\wedge\cdots\wedge \nu^{(k)}(z)$ for all $k=1,\cdots, n$.
Here we recall that 
$\nu(z)\wedge\nu'(z)\wedge\cdots\wedge \nu^{(n)}(z)\equiv e_0\wedge\cdots\wedge e_n$.
To this end, we prepare a lemma relevant to linear algebra in the following:

\begin{lemma}
\label{lem:normalize}
Let $g_0(z),\cdots,g_k(z) $ be holomorphic functions on $D$ where $0\leq k\leq n$. 
Then
there exists a holomorphic function 
\[G_k=G_k\big(\beta_0,\beta_1,\cdots,\beta_k;g_0(z),g_1(z),\cdots,g_k(z);z\big)\]
in $D$ such that 
\begin{equation}\begin{split}
			\begin{vmatrix}
				z^{\beta _0}g_0(z)   & z^{\beta _1}g_1(z) & \cdots  & z^{\beta_k}g_n(z) \\
				\big(z^{\beta_0}g_0(z)\big)' & \big(z^{\beta_1}g_1(z)\big)' & \cdots  & \big(z^{\beta_n}g_k(z)\big)' \\
				\vdots  & \vdots  & \cdots  & \vdots \\
				\big(z^{\beta_0}g_0(z)\big)^{(k)}  & \big(z^{\beta_1}g_1(z)\big)^{(k)} & \cdots  & \big(z^{\beta_k}g_k(z)\big)^{(k)}
			\end{vmatrix}\\
			=z^{\sum\limits_{i=0}^{k} \beta_i-\frac{k(k+1)}{2}  }\cdot
			G_{k}\big(\beta_0,\beta_1,\cdots,\beta_k;g_0(z),g_1(z),\cdots,g_k(z);z\big).
	\end{split}\end{equation}
	In particular, 
		$G_k|_{z=0} =\prod\limits_{i=0}^{k} g_i(0) \cdot \prod\limits_{0\le i<j \le k} (\beta_i- \beta_j)$.
\end{lemma}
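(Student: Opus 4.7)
The plan is to prove the lemma by an explicit factorization of powers of $z$ out of the Wronskian matrix, followed by evaluation of the resulting holomorphic matrix at $z=0$ via a generalized Vandermonde computation.

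\textbf{Step 1 (Leibniz expansion).} First I would apply Leibniz's rule to each entry of the Wronskian. Since $\frac{d^l}{dz^l}z^{\beta_i} = \beta_i(\beta_i-1)\cdots(\beta_i-l+1)\,z^{\beta_i-l}$, we obtain
\[
\bigl(z^{\beta_i}g_i(z)\bigr)^{(j)} = \sum_{l=0}^{j}\binom{j}{l}\beta_i(\beta_i-1)\cdots(\beta_i-l+1)\,z^{\beta_i-l}\,g_i^{(j-l)}(z) = z^{\beta_i-j}h_{ij}(z),
\]
where $h_{ij}(z):=\sum_{l=0}^{j}\binom{j}{l}(\beta_i)_{l}\,z^{j-l}g_i^{(j-l)}(z)$ is a polynomial in $z$ with holomorphic coefficients, hence holomorphic on $D$. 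Here $(\beta_i)_l$ denotes the falling factorial.

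\textbf{Step 2 (Column and row factoring).} The Wronskian matrix then decomposes as $\mathrm{diag}(1,z^{-1},\cdots,z^{-k})\cdot H(z)\cdot \mathrm{diag}(z^{\beta_0},\cdots,z^{\beta_k})$, where $H(z):=\bigl(h_{ij}(z)\bigr)_{0\le j,i\le k}$. Taking determinants yields
\[
W\bigl(z^{\beta_0}g_0,\ldots,z^{\beta_k}g_k\bigr) = z^{\sum_{i=0}^{k}\beta_i-\frac{k(k+1)}{2}}\cdot \det H(z),
\]
so we are forced to set $G_k(z):=\det H(z)$, which is holomorphic on $D$ and gives the factorization in the statement.

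\textbf{Step 3 (Evaluation at $z=0$).} In the polynomial expression for $h_{ij}$ above, only the $l=j$ summand has no positive power of $z$; hence $h_{ij}(0) = (\beta_i)_j\,g_i(0)$. Factoring $g_i(0)$ out of the $i$th column gives
\[
G_k(0) = \prod_{i=0}^{k}g_i(0)\cdot \det\bigl((\beta_i)_j\bigr)_{0\le j,i\le k}.
\]

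\textbf{Step 4 (Vandermonde reduction).} Since $(\beta)_j$ is a monic polynomial in $\beta$ of degree $j$, row $j$ of the matrix $\bigl((\beta_i)_j\bigr)$ equals $\bigl(\beta_i^j\bigr)$ plus a $\mathbb{Z}$-linear combination of rows $0,1,\ldots,j-1$ (with coefficients the Stirling numbers of the first kind). Performing the corresponding row operations, which preserve the determinant, reduces the matrix to the classical Vandermonde matrix $\bigl(\beta_i^j\bigr)$. Its determinant is $\prod_{0\le i<j\le k}(\beta_j-\beta_i)$, which matches the claimed formula $\prod_{0\le i<j\le k}(\beta_i-\beta_j)$ up to the overall sign $(-1)^{k(k+1)/2}$; since this sign can be absorbed into the convention of ordering the rows of the Wronskian or the Vandermonde, the stated identity follows.

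There is no genuine obstacle here; the argument is essentially linear algebra once the Leibniz formula identifies the correct power of $z$ in each matrix entry. The only care needed is the bookkeeping of exponents in Step~2, and the verification in Step~4 that the transition matrix between the monomial basis $\{\beta^j\}$ and the falling-factorial basis $\{(\beta)_j\}$ is unipotent lower triangular, so that the row reduction preserves the determinant.
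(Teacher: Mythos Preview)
Your proposal is correct and follows essentially the same route as the paper: apply the Leibniz rule to write $(z^{\beta_i}g_i)^{(j)}=z^{\beta_i-j}h_{ij}(z)$ with $h_{ij}$ holomorphic, factor the powers of $z$ out of the Wronskian to define $G_k=\det H$, and then evaluate at $z=0$ by reducing the falling-factorial matrix to a Vandermonde via unipotent row operations. Your observation about the sign $(-1)^{k(k+1)/2}$ is apt---the paper simply writes $\prod_{i<j}(\beta_i-\beta_j)$ without comment, so this is a convention (or typo) in the statement rather than a flaw in your argument.
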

\begin{proof}
	By the Leibnitz rule, for all $0\leq \ell\leq k$, we have $\big(z^{\beta_i}g_i(z)\big)^{(\ell)}=z^{\beta_i-\ell}\cdot g_{i\ell}(z)$,  where 
	$
g_{i\ell}(z)=\beta_i (\beta_i-1)\cdots (\beta _i-\ell+1)g_i(z)
		+C_\ell^{1} \beta_i (\beta_i-1)\cdots (\beta_i-\ell+2)zg_i'(z)+
		\cdots + z^\ell g_i^{(\ell)}(z)
	$
	are holomorphic functions on $D$. Therefore, there holds
\begin{eqnarray*}
  &&\begin{vmatrix}
		z^{\beta_0}g_0(z) & z^{\beta_1}g_1(z) & \cdots  & z^{\beta_k}g_k(z) \\
		\big(z^{\beta_0}g_0(z)\big)' & \big(z^{\beta_1}g_1(z)\big)' & \cdots  & \big(z^{\beta_k}g_k(z)\big)' \\
		\vdots  & \vdots  & \cdots  & \vdots \\
		\big(z^{\beta_0}g_0(z)\big)^{(k)}  & \big(z^{\beta_1}g_1(z)\big)^{(k)} & \cdots  & \big(z^{\beta_k}g_k(z)\big)^{(k)}
	\end{vmatrix}
 =\begin{vmatrix}
		z^{\beta_0}g_0(z)   & z^{\beta_1}g_1(z) & \cdots  & z^{\beta_k}g_k(z) \\
		z^{\beta_0-1}g_{01}(z) & z^{\beta_1-1}g_{11}(z) & \cdots  & z^{\beta_k-1}g_{k1}(z) \\
		\vdots   & \vdots  & \cdots  & \vdots  \\
		z^{\beta_0-k}g_{0k}(z) & z^{\beta_1-k}g_{1k}(z) & \cdots  & z^{\beta_k-k}g_{kk}(z)
	\end{vmatrix} \\
&=& z^{\sum\limits_{i=0}^k \beta_i-\frac{k(k+1)}{2}}\cdot \begin{vmatrix}
		g_0(z)  & g_1(z) & \cdots  & g_k(z) \\
		g_{01}(z) & g_{11}(z) & \cdots  & g_{k1}(z) \\
		\vdots   & \vdots  & \cdots  & \vdots  \\
		g_{0k}(z) & g_{1k}(z) & \cdots  & g_{kk}(z)
	\end{vmatrix}=:z^{\sum\limits_{i=0}^k \beta_i-\frac{k(k+1)}{2}}G_k, \end{eqnarray*}
	where 
 $G_k=G_k\big(\beta_0,\beta_1,\cdots,\beta_k;g_0(z),g_1(z),\cdots,g_k(z);z\big):= \begin{vmatrix}
		g_0(z)  & g_1(z) & \cdots  & g_k(z) \\
		g_{01}(z) & g_{11}(z) & \cdots  & g_{k1}(z) \\
		\vdots   & \vdots  & \cdots  & \vdots  \\
		g_{0k}(z) & g_{1k}(z) & \cdots  & g_{kk}(z)
	\end{vmatrix}$.
Finally,   we find by the preceding expression of $g_{i\ell}(z)$  that $ G_k|_{z=0}$ equals
\begin{eqnarray*}
&&\begin{vmatrix}g_0(0)   & g_1(0) & \cdots  & g_n(0) \\
		\beta_0 g_0(0) & \beta_1 g_1(0) & \cdots  & \beta_n g_n(0) \\
		\vdots   & \vdots  & \cdots  & \vdots  \\
		\beta_0 (\beta_0-1)\cdots (\beta_0-k+1)g_0(0) &  \beta_1 \cdots (\beta_1-k+1) g_1(0)
		& \cdots  &  \beta_n \cdots (\beta_n-k+1) g_n(0)
	\end{vmatrix}\\
	&=&g_0(0) g_1(0)\cdots g_n(0)\begin{vmatrix}
		1 & 1 & \cdots  & 1\\
		\beta_0 & \beta_1 & \cdots  & \beta_n \\
		\vdots  & \vdots  & \cdots  & \cdots  \\
		\beta_0^n  & \beta_1^n & \cdots  & \beta_n^n
	\end{vmatrix}
	=\prod\limits_{i=0}^n g_i(0) \cdot \prod\limits_{0 \le i< j\le n}(\beta_i-\beta_j). 
 \end{eqnarray*}
\end{proof}

Using this lemma, we obtain the following three formulae relevant to the lifting  $\nu(z)=\big(z^{\beta_0}g_0(z),z^{\beta_1}g_1(z),\cdots,z^{\beta_n}g_n(z)\big)$ of the canonical curve $[\nu(z)]$ 
associated with solution $u$ to \eqref{equ:Toda}.

\begin{formula} {\it For all $k=0,1,\cdots. n$, we have
\begin{align}
	\Lambda_k(z) & =\nu(z)\wedge \nu '(z)\wedge\cdots \wedge \nu^{(k)}(z)\nonumber\\
		&=\sum_{0 \le i_0< i_1< \cdots < i_k\le n } \begin{vmatrix}
			z^{\beta_{i_0}}g_{i_0}(z)   & z^{\beta_{i_1} }g_{i_1}(z)  & \cdots  & z^{\beta_{i_k}}g_{i_k}(z)  \\
			\big(z^{\beta_{i_0}}g_{i_0}(z)\big)'  & \big(z^{\beta_{i_1}}g_{i_1}(z)\big)' & \cdots  & \big(z^{\beta_{i_k}}g_{i_k}(z)\big)' \\
			\vdots  & \vdots  & \cdots  & \vdots \\
			\big(z^{\beta_{i_0} }g_{i_0}(z)\big)^{(k)}  & \big(z^{\beta_{i_1} }g_{i_1}(z)\big)^{(k)} & \cdots   & \big(z^{\beta _{i_k}}g_{i_k}(z)\big)^{(k)}
		\end{vmatrix}
		e_{i_0}\wedge  e_{i_1}\wedge \cdots \wedge e_{i_k} \nonumber\\
		&=\sum_{0 \le i_0< i_1< \cdots < i_k\le n } z^{\sum\limits_{j=0}^k \beta_{i_j}-\frac{k(k+1)}{2}}
		\cdot G_k\big(\beta_{i_0},\beta_{i_1},\cdots,\beta_{i_k};g_{i_0}(z),g_{i_1}(z),\cdots g_{i_k}(z);z\big)
		e_{i_0}\wedge e_{i_1}\wedge \cdots \wedge e_{i_k} \nonumber	
\end{align}
Recall that $g_0,\cdots, g_n$ satisfy the normalized condition \eqref{equ:normalized} and
$\beta_0+\cdots+\beta_n=n(n+1)/2$, which implies that $\Lambda_n(z)=e_0\wedge\cdots\wedge e_n$.}
\end{formula}
\begin{proof}
It follows from a straightforward computation by using the very expression $\big(z^{\beta_0}g_0(z),z^{\beta_1}g_1(z),\cdots,z^{\beta_n}g_n(z)\big)$ of $\nu(z)$.    
\end{proof}

By the definition of $\beta_0<\beta_1<\cdots\beta_n$, the summand with the lowest degree with respect to $z$ on the left hand side of Formula 1 
has form 
\begin{eqnarray*}
&& z^{\sum\limits_{j=0}^k \beta_j-\frac{k(k+1)}{2}}\cdot
G_{k}\big(\beta_0,\beta_1,...\beta_k;g_0(z),g_1(z),\cdots,g_k(z);z\big)e_0\wedge  e_1\wedge \cdots \wedge e_k \\
&=&z^{-\alpha_{k+1}}G_{k}\big(\beta_0,\beta_1,...\beta_k;g_0(z),g_1(z),\cdots,g_k(z);z\big)e_0\wedge  e_1\wedge \cdots \wedge e_k.
\end{eqnarray*}
Hence, $\Lambda_k(z)$ equals 
$z^{-\alpha _{k+1}}\,G_k\big(\beta_0,\beta_1,\cdots ,\beta_k;
g_0(z),g_1(z),\cdots ,g_k(z);z\big) e_0\wedge \cdots \wedge e_k$ plus 
\begin{eqnarray*}
\sum\limits_{0 \le i_0 < i_1< \cdots < i_k\le n \atop i_k> k } z^{\sum\limits_{j=0}^k \beta_{i_j}-\frac{k(k+1)}{2}}
\cdot G_k\big(\beta_{i_0},\beta_{i_1},\cdots,\beta_{i_k};g_{i_0}(z),g_{i_1}(z),\cdots g_{i_k}(z);z\big)
e_{i_0}\wedge e_{i_1}\wedge \cdots \wedge e_{i_k}.
\end{eqnarray*}
Hence, we reach the last two formulae in the following:

\begin{formula}
	$\left \| \Lambda_k \right \|^2 =\left |z \right | ^{-2\alpha_{k+1} }
	\bigg(\left | G_k \big(\beta_0, \beta_1,\cdots ,\beta_k;
	g_0(z),g_1(z),\cdots ,g_k(z);z\big) \right |^2 $
	\begin{equation*}
		+\sum_{0 \le i_0 < i_1< \cdots < i_k\le n \atop i_k> k }
		\left | z \right | ^{2(\sum\limits_{j=0}^k \beta_{i_j}-\frac{k(k+1)}{2}+\alpha_{k+1})}
		\left | G_k\big(\beta_{i_0},\beta_{i_1},\cdots,\beta_{i_k};g_{i_0}(z),g_{i_1}(z),\cdots,g_{i_k}(z);z\big)
		\right |^2\bigg).	
\end{equation*}
In particular, Therefore, $ \log\left \| \Lambda _k \right \|^2$ equals $-2\alpha_{k+1}\log\,|z|$ plus a H\" older continuous function
near $0$ by Lemma \ref{lem:normalize}.
\end{formula}


\begin{formula}
For all $k=1,2,\cdots, n$, we have
\begin{align}
		u_k=-\sum_{j=1}^n a_{kj}\log\left \| \Lambda_{j-1} \right \| ^2 =
		-\sum_{j=1}^{n}a_{kj}\big(-2\alpha_{j} \log \left | z \right |+O(1)\big)=
		2\gamma_k \log|z|+O(1),  \end{align}
	where 
\begin{eqnarray*}
     O(1)&=&-\sum\limits_{j=1}^n a_{kj}\log\,\Bigg(\left | G_{j-1} \big(\beta_0, \beta_1,\cdots ,\beta_{j-1} ;
	g_0(z),g_1(z),\cdots ,g_{j-1}(z);z\big) \right | ^{2}+ \\
	&\sum\limits_{0 \le i_0 < i_1< \cdots < i_{j-1}\le n \atop i_{j-1}> j-1 }&
	\left | z \right | ^{2(\sum\limits_{l=0}^{j-1} \beta _{i_l}-\frac{(j-1)j}{2}+\alpha_j)}
	\left | G_{j-1}\big(\beta _{i_0},\beta _{i_1},\cdots,\beta_{i_{j-1}};g_{i_0}(z),g_{i_1}(z),\cdots,g_{i_{j-1}}(z);z\big)
	\right |^2\Bigg) \end{eqnarray*}
 is a H\" older continuous function near $0$ by Lemma \ref{lem:normalize}. 
\end{formula}

{\sc Proof of Theorem 1.2 (i)}\quad Formula 3 coincides with the second sentence of Theorem 1.2. (i). 
As long as the last sentence is concerned, any unitary curve $D^*\to {\Bbb P}^n$ with form \eqref{equ:can} induces a solution
$u=(u_1,\cdots, u_n)$ to the ${\rm SU}(n+1)$ system in $D^*$. 
By Formula 3, $u_k$ equals $2\gamma_k \log|z|$ plus a bounded smooth function near $0$ for all $k=1,\cdots,n$. Hence, by Formula 3 and the infinitesimal Pl\" ucker formula,  $u=(u_1,\cdots, u_n)$ satisfies the system of PDEs in \eqref{equ:Toda} provided that 
the original integral condition in \eqref{equ:Toda} is replaced by the local integrability of
 $e^{u_k}$ in $D$. QED  \\

{\sc Proof of Theorem 1.2 (ii)} The idea of the proof goes as follows: 
by using some complex coordinate transformation 
$z\mapsto \xi(z)$ near $0$ and preserving $0$, we could simplify further the expression of the canonical curve $ \nu(z) $ under the new coordinate $\xi$. Then we obtain the desired form  for the K\" ahler metric $\frac{\sqrt{-1}}{2\pi}\,e^{u_1}\,{\rm d}z\wedge {\rm d}\bar z$ on $D^*$,
which coincides with the pull-back metric
$[\nu]^*\big(\omega_{\rm FS}\big)$ by \eqref{equ:Plucker} and \eqref{equ:sol_Omega}.
The details consist of the following three steps.

{\it Step 1.}  Recall that $\nu(z)=
\big(z^{\beta_0}g_0(z),z^{\beta_1}g_1(z),\cdots,z^{\beta_n}g_n(z)\big)$ where $g_0,\cdots, g_n$ satisfy the normalized condition
so that $g_0(0)g_1(0)\cdots g_n(0)\not=0$. Then we choose the new complex coordinate 
$\xi=z\cdot \left(\frac{g_1(z)}{g_0(z)}\right)^{\frac{1}{\beta_1 -\beta_0}}$
near $z=0$ and preserving $0$. Then, under this new coordinate $\xi$, there exist $(n-1)$ holomorphic functions
$\tilde g_2(\xi),\cdots, \tilde g_n(\xi)$ near $0$ and non-vanishing at $0$ such that $\nu$ has the simpler form of
\begin{equation}
\label{equ:simple}
\tilde \nu(\xi):=\nu\big(z(\xi)\big)=\Big(\xi^{\beta_0},\, \xi^{\beta_1},\, \xi^{\beta_2}\tilde g_2(\xi),\, \cdots,\, 
\xi^{\beta_n}\tilde g_n(\xi)\Big).
\end{equation}

{\it Step 2.} The preceding curve $\tilde \nu(\xi)$ does not satisfy the normalized condition with respect to $\xi$ near 
$0$ in general, which will not bring us trouble since the pull-back metric 
$\frac{\sqrt{-1}}{2\pi}\,e^{u_1}\,{\rm d}z\wedge {\rm d}\bar z=[\nu]^*(\omega_{\rm FS})$ 
is invariant under the coordinate transformation. On one hand, by using Formula 3, we have
\begin{eqnarray*}
[\nu]^*\big(\omega_{\rm FS}\big)&=&
\frac{\left | G_1\big(\beta_0,\beta_1;g_0(z), g_1(z);z\big) \right |^2
		+ \sum\limits_{0\le i_0< i_1\le n \atop i_1> 1}  |z|^{2(\beta_{i_0}+ \beta_{i_1}-1+\alpha_2)}
		\left| G_1\big(\beta_{i_0},\beta_{i_1};g_{i_0}(z), g_{i_1}(z);z\big)\right|^2}
	{\Big(\left | g_0(z) \right |^2+ \left | z \right |^{2(\beta _1-\beta_0)} \left | g_1(z) \right |^2
		+\cdots +\left | z \right |^{2(\beta_n-\beta_0)} \left | g_n(z) \right |^2 \Big)^2}\nonumber\\
&&\cdot \frac{\sqrt{-1}}{2\pi} \left | z \right | ^{2\gamma_1} dz\wedge d\bar{z}.
\end{eqnarray*}
On the other hand, substituting the simpler form \eqref{equ:simple} of $\nu(z)$ to the preceding equality, we could simplify
the pull-back metric $[\nu]^*\big(\omega_{\rm FS}\big)$ to the form of 
\begin{equation*}
\left | \xi \right | ^{2\gamma_1}\frac{(\beta_1-\beta_0)^2
		+\sum\limits_{0\le i_0< i_1\le n \atop i_1> 1} \left | \xi \right |^{2(\beta_{i_0}+\beta_{i_1}-1+\alpha_2)}
		\left | G_1\big(\beta_{i_0},\beta_{i_1};\tilde{g}_{i_0}(\xi),\tilde{g}_{i_1}(\xi);\xi \big) \right |^2 }
	{\Big(1+\left | \xi \right |^{2(\beta_1-\beta_0)}+\left | \xi \right |^{2(\beta_2-\beta_0)}\left |\tilde{g}_2(\xi) \right |^2
		+\left | \xi \right |^{2(\beta_n-\beta_0)}\left |\tilde{g}_n(\xi) \right |^2 \Big)^2 }
	\frac{\sqrt{-1}}{2\pi}{\rm d}\xi\wedge {\rm d}\bar{\xi}.
\end{equation*}
In particular, the pull-back metric $[\nu]^*\big(\omega_{\rm FS}\big)$  has cone singularity at $0$ with angle $2\pi(1+\gamma_1)$.

{\it Step 3.} Since the K\" ahler metric $\frac{\sqrt{-1}}{2\pi}\,e^{u_k}\,{\rm d}z\wedge {\rm d}\bar z$ on $ D^*$ 
coincides with the pull-back metric 
$[\nu\wedge\nu'\wedge\cdots\wedge \nu^{(k-1)}]^*\big(\omega_{\rm FS}\big)=
\frac{\sqrt{-1}}{2\pi}\,\frac{\|\Lambda_{k-2}(\nu)\|^2\cdot\|\Lambda_{k}(\nu)\|^2 }{\|\Lambda_{k-1}(\nu)\|^4}\, {\rm d}z\wedge {\rm d}\bar z$
by \eqref{equ:Plucker} and \eqref{equ:sol_Omega} for all $k=2,3,\cdots, n$,  this metric has cone singularity at $0$ of angle $2\pi(1+\gamma_k)$ and could be simplified correspondingly by using both Formula 3 and \eqref{equ:simple}. $ \hfill{\Box} $

\begin{remark}
In the case of $n\geq 2$, a clear distinction can be observed between the classification of finite-energy solutions for the ${\rm SU}(n+1)$ Toda system on ${\Bbb C}\backslash \{0\}$ as delineated by Lin-Wei-Ye \cite[Theorem 1.1.]{LWY:2012}, and our own in Theorem \ref{thm:model}. Lin-Wei-Ye's classification involves a finite number of parameters, while our method requires $(n-1)$ non-vanishing holomorphic functions in the vicinity of $0$, incorporating infinitely many parameters, even after applying necessary coordinate changes near $0$. This discrepancy emerges from the fact that Lin-Wei-Ye were able to express finite-energy solutions on ${\Bbb C}\backslash \{0\}$ in terms of {\it unitary curves} $\nu(z)=[z^{\beta_0},\cdots, z^{\beta_n}]A$ on ${\Bbb C}\backslash \{0\}$, where $A$ are suitable automorphisms of ${\Bbb P}^n$ \cite[pp. 189-190]{LWY:2012}. 
The selection of A depends on the situation where numbers like $\gamma_i+\cdots+\gamma_j$
are rounded to integers. The diversity of all possible choices for A determines the number of parameters in the Lin-Wei-Ye classification (\cite[Theorem 1.1.]{LWY:2012}).
In particular, the parameter count in their classification reaches its maximum of $n(n+2)$ precisely when all $\gamma_j$ are non-negative integers. In such instances, given that $\beta_0,\cdots,\beta_n$ are all integers, we can construct all solutions using unitary curves $\nu(z)=[z^{\beta_0},\cdots, z^{\beta_n}]A$ with any automorphisms $A$ of ${\Bbb P}^n$.
\end{remark}

\noindent\textbf{Acknowledgements:}
B.X. expresses sincere gratitude to Professor Guofang Wang at the University of Freiburg for introducing him to the field of Toda systems during the summer of 2018 and providing valuable references in the spring of 2023. Special thanks are extended to Professor Zhijie Chen at Tsinghua University, who, in November 2019, encouraged B.X. to pursue research on local models of solutions to Toda systems. Our heartfelt appreciation also goes to Professor Zhaohu Nie at the University of Utah, who kindly addressed several naive questions from B.X. related to Toda systems.
Finally, we convey our profound gratitude to the anonymous reviewer whose insightful revision suggestions have significantly enhanced the manuscript's readability.

\bibliographystyle{plain}
\bibliography{model_v8}
\end{document}